\newcommand{\ZZ}{\mathbf Z}
\newcommand{\QQ}{\mathbf Q}
\newcommand{\AC}{AC{ }}
\renewcommand{\bZ}{\ZZ}
\renewcommand{\bQ}{\QQ}
\newcommand{\ie}{{\itshape i.e.} }
\DeclareFontFamily{OT1}{rsfs}{}
\DeclareFontShape{OT1}{rsfs}{n}{it}{<-> rsfs10}{}
\DeclareMathAlphabet{\mathscr}{OT1}{rsfs}{n}{it}
\newtheoremstyle%
{custom}%
{}
{}
{}
{}
{}
{.}
{ }
{\thmname{}
\thmnumber{}%
\thmnote{\bfseries #3}}%
\newtheoremstyle%
{Theorem}%
{}%
{}%
{\itshape}%
{}%
{}%
{.}%
{ }%
{\thmname{\bfseries #1}%
\thmnumber{\;\bfseries #2}%
\thmnote{\;(\bfseries #3)}}%
\newcounter{con}
\theoremstyle{Theorem}
\newtheorem{thm}{Theorem}[section]
\newtheorem{cor}[thm]{Corollary}
\newtheorem{lem}[thm]{Lemma}
\newtheorem*{mainthm}{Main Theorem}
\theoremstyle{definition}
\newtheorem{dff}[thm]{Definition}
\newtheorem{definition}[thm]{Definition}
\newtheorem{xmp}[thm]{Example}
\newtheorem{remark}[thm]{Remark}
\newtheorem{conj}[con]{Conjecture}
\newtheorem*{mainquestion}{Motivating Question}
\def\mbf{\mathbf}
\def\tn{\textnormal}
\def\goth{\mathfrak}
\def\Tr{\tn{Tr}}
\DeclareMathOperator{\AlDiv}{WSh}
\newcommand{\QAlDiv}{\AlDiv_{\bQ}}
\newcommand{\ZPAlDiv}{\AlDiv_{\bZ_{(p)}}}
\def\hst{hereditary surjective trace}
\renewcommand{\O}{\mathcal O}
\journal{Journal of Algebra}
\begin{document}

\begin{frontmatter}

\title{Semi-log canonical vs $F$-pure singularities}
\author[LEM]{Lance Edward Miller}
\address[LEM]{Department of Mathematics\\ University of Utah\\ Salt Lake City, UT, 84112, USA}
\fntext[LEM]{The first author was partially supported by a National Science Foundation VIGRE Grant, \#0602219}
\ead{lmiller@math.utah.edu}
\author[KS]{Karl Schwede}
\address[KS]{Department of Mathematics\\ The Pennsylvania State University\\ University Park, PA, 16802, USA}
\fntext[KS]{The second author was partially supported by the National Science Foundation postdoctoral fellowship \#0703505 and NSF DMS \#1064485/0969145}
\ead{schwede@math.psu.edu}

\begin{keyword}
Frobenius map, Frobenius splitting, F-purity, semi-log canonical, log canonical, inversion of adjunction, normalization, seminormal
\end{keyword}

\begin{abstract}
If $X$ is Frobenius split, then so is its normalization and we explore conditions which imply the converse.  To
do this, we recall that given an $\mathcal{O}_X$-linear map $\phi : F_* \mathcal{O}_X \to \mathcal{O}_X$, it always extends to a map $\bar{\phi}$ on the normalization of $X$.  In this paper, we study when the surjectivity of $\bar{\phi}$
implies the surjectivity of $\phi$.
 While this doesn't occur generally, we show it always happens if certain tameness conditions are satisfied for the normalization map.
Our result has geometric consequences including a connection between
$F$-pure singularities and semi-log canonical singularities, and a more familiar version of the ($F$-)inversion of adjunction formula.
\end{abstract}

\end{frontmatter}


\numberwithin{equation}{thm}

\section{Introduction}

Frobenius splittings of algebraic varieties appear prominently in tight closure theory, have emerged as a fundamental tool in the study of the representation theory of algebraic groups, and have tantalizing links to concepts in the minimal model program.
Suppose that $R$ is a reduced ring of characteristic $p > 0$ with normalization in its total ring of fractions $R^{\textnormal{N}}$.  It follows from \cite[Exercise 1.2.4(E)]{BrionKumarFrobeniusSplitting} that if $\Spec R$ is Frobenius split then so is $\Spec R^{\textnormal{N}}$.  The goal of this paper is to study to what extent the converse holds.  Of course, there are many non-Frobenius split affine varieties whose normalizations are regular and thus Frobenius split (for example, the cusp $y^2 = x^3$ in any characteristic) and so we focus our attention not just on the ring, but on the ring and a choice of a potential Frobenius splitting.

By definition, a \emph{Frobenius splitting} on a variety $X = \Spec R$ is an $\O_X$-linear map $\phi \colon F_* \O_X \to \O_X$ which sends the section $1$ to $1$.  When $X$ is affine, the existence of a Frobenius splitting is equivalent to existence of a map $\phi \colon F_* \O_X \to \O_X$ such that $\phi(z) = 1$ for some section $z \in F_* \O_X$ (in other words, to require that $\phi$ is surjective).

On a normal $X = \Spec R$, surjective maps $\phi : F_* \O_X \to \O_X$ are very closely related to boundary divisors $\Delta$ such that $(X, \Delta)$ is log canonical, see \cite{HaraWatanabeFRegFPure} and \cite{SchwedeFAdjunction}.  Based upon our intuition with non-normal log canonical singularities (which are usually called \emph{semi-log canonical}) one should be able to detect surjective $\phi : F_* \O_X \to \O_X$ by studying the normalization of $X$.  In fact, this correspondence between log canonical pairs $(X, \Delta)$ and surjective $\phi$ suggests that the following question should have an affirmative answer.

\begin{mainquestion}
\label{quest.motivation}
Suppose that $X$ is an affine variety in characteristic $p > 0$ with normalization $\eta \colon X^{\textnormal{N}} \to X$, and set $F^e \colon X \to X$ to be the $e$-iterated Frobenius.  Given a map $\phi \colon F^e_* \O_X \to \O_X$, it always extends to a unique map $\overline{\phi} \colon F^e_* \O_{X^{\textnormal{N}}} \to \O_{X^{\textnormal{N}}}$.  If $\overline{\phi}$ is surjective, does it follow that $\phi$ is also surjective?
\end{mainquestion}

Perhaps unfortunately, this question has a negative answer.  A counterexample is given by the scheme $X = \Spec \bF_2[x,y,z]/(x^2y - z^2)$,
\cite[Example 8.4]{SchwedeFAdjunction}.  However, that counterexample possesses substantial inseparability:  If $C$ is the non-normal locus of $X$ and $B$ is its pre-image inside $X^{\textnormal{N}}$, then the induced map $B \to C$ is generically inseparable.
We show that if we can avoid this inseparability and also a certain variant of wild ramification, then the question has a positive answer.  In particular, we say that a ring possesses \emph{\hst} if it avoids these positive characteristic pathologies (see Definition \ref{def.hst} for a precise definition). Our main theorem follows:

\begin{mainthm}
\label{thm:MainTheorem} \textnormal{[Theorem \ref{thm:main}]}
Suppose that $X$ is a reduced $F$-finite affine scheme having \hst.  Further suppose that $\phi \colon F^e_* \O_X \to \O_X$ is an $\O_X$-linear map.  If the unique extension $\overline{\phi} \colon F^e_* \O_{X^{\textnormal{N}}} \to \O_{X^{\textnormal{N}}}$ is surjective, then $\phi$ is also surjective.
\end{mainthm}

The main theorem should also be viewed as complementary to \cite[Theorem 6.26]{SchwedeTuckerTestIdealFiniteMaps}.  In the context of a finite surjective map $\pi \colon Y \to X$ between \emph{normal} varieties, that result answers the same question for $\phi \colon F_* \O_X \to \O_X$ and its extension $\overline{\phi} \colon F_* \O_Y \to \O_Y$ (if it exists).  Also see \cite{GotoWatanabeTheStructureOfOneDimensionalFPureRings} where 1-dimensional non-normal $F$-pure rings are studied.

We now discuss in more detail the motivation for this theorem.  In this context, maps $\phi \colon F_* ^e\O_{X^{\textnormal{N}}} \to \O_{X^{\textnormal{N}}}$ correspond to $\QQ$-divisors $\Delta$ on $X^{\textnormal{N}}$ such that $K_{X^{\textnormal{N}}} + \Delta$ is $\QQ$-Cartier, see \cite{SchwedeFAdjunction} and Section \ref{sec:prelim}. The condition $\phi$ being surjective (\ie the pair $(X^{\textnormal{N}}, \Delta)$ being $F$-pure) corresponds to  the pair $(X^{\textnormal{N}}, \Delta)$ having log canonical singularities \cite{HaraWatanabeFRegFPure}, \cite{SchwedeFAdjunction}.  On a characteristic zero variety $X$ that is S2 (\ie Serre's second condition) and G1 (\ie Gorenstein in codimension $1$) with normalization $\eta : X^{\textnormal{N}} \to X$ and conductor divisor $B \subseteq X^{\textnormal{N}}$, if there exists a $\bQ$-divisor $\Gamma$ on $X$ such that $K_X + \Gamma$ is $\QQ$-Cartier and furthermore that $(X^{\textnormal{N}}, \eta^* \Gamma + B)$ is log canonical, then $(X, \Gamma)$ is called \emph{semi-log canonical}.  Via the map-($\phi$) divisor-($\Delta$) correspondence, our main theorem should be interpreted as saying:
\vskip 6pt
 \begin{tabular}[c]{ll}
    \parbox[t]{5in}{ \emph{Varieties that are S2, G1 and have $F$-pure singularities are the right characteristic $p > 0$ analog of semi-log canonical varieties in characteristic zero.} }
  \end{tabular}
\vskip 6pt
As an explicit example of this statement, and of the translation between maps $F^e_* \O_X \to \O_X$ and divisors, we have the following corollary of our main theorem, a criterion for certain non-normal algebraic varieties to be $F$-split.
\vskip 6pt
\noindent
{\bf{Corollary \ref{cor.hstNormalizationQGorenstein}.}} {\it
Suppose that $X = \Spec R$ is an affine $F$-finite scheme satisfying \hst{} and which is also S2, G1 and $\bQ$-Gorenstein with index not divisible by the characteristic $p$.  Set $X^{\textnormal{N}}$ to be the normalization of $X$ and set $B$ to be the divisor on $X^{\textnormal{N}}$ corresponding to the conductor ideal, \ie $\goth{c} = \O_{X^{\textnormal{N}}}(-B)$.  Then $X$ is $F$-pure (equivalently $F$-split) if and only if $(X^{\textnormal{N}}, B)$ is $F$-pure.
}
\vskip 6pt
\noindent In the above Corollary, if one replaces the two occurrences of the word $F$-pure by semi-log canonical and log canonical (respectively), one obtains a well known criterion for a scheme having semi-log canonical singularities, see \cite{KollarFlipsAndAbundance}.

More generally, the correspondence between log canonical and $F$-pure singularities is still conjectural, although an area of active research.
\begin{conj}
\label{conj.FpureVsLogCanonical}
Suppose that $Y$ is a normal variety of characteristic zero and that $\Delta$ is a divisor on $Y$ such that $K_Y + \Delta$ is $\QQ$-Cartier.  The pair $(Y, \Delta)$ is log canonical if and only if $(Y, \Delta)$ has dense $F$-pure type\footnote{A characteristic $0$ pair $(Y,\Delta)$ has dense $F$-pure type if the map corresponding to $\Delta$ is surjective after reduction to characteristic $p > 0$ for infinitely many $p > 0$. See \cite{HaraWatanabeFRegFPure} for more details. }.
\end{conj}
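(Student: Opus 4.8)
Since this statement is phrased as a conjecture, the realistic goal of a ``proof proposal'' is to settle one implication outright and to reduce the other to a sharply isolated open problem. For the implication \emph{dense $F$-pure type $\Rightarrow$ log canonical}, I would spread the pair $(Y,\Delta)$ out over a finitely generated $\bZ$-subalgebra $A$ of the base field, together with a single fixed log resolution, so that for every closed point $s\in\Spec A$ the fiber $(Y_s,\Delta_s)$ inherits the ``same'' discrepancies as $(Y,\Delta)$. By hypothesis the Frobenius-trace map attached to $\Delta$ is surjective on infinitely many of these fibers; that is, $(Y_s,\Delta_s)$ is $F$-pure for an unbounded set of residue characteristics. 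By the theorem of Hara--Watanabe \cite{HaraWatanabeFRegFPure} an $F$-pure pair is log canonical, so those fibers are log canonical, and a discrepancy $<-1$ in characteristic zero would persist in every large-$p$ fiber, a contradiction. This direction is therefore essentially known.

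For the converse, \emph{log canonical $\Rightarrow$ dense $F$-pure type}, the plan would be: first discard the finitely many primes dividing the index of $K_Y+\Delta$ so that the corresponding map genuinely exists mod $p$; then use inversion-of-adjunction-type arguments (in the $F$-singularities setting, the $F$-inversion of adjunction machinery) to induct on the dimension of the log canonical centers and so reduce to controlling Frobenius on a model of a smooth ambient variety. The crucial structural input is the link, made precise by Musta\c{t}\u{a} and Srinivas, between surjectivity of the relevant Frobenius map after reduction modulo $p$ and the \emph{ordinarity} of the reduction: if the reduction of the smooth ambient variety is ordinary for an infinite set of primes, then a log canonical pair on it acquires $F$-pure type along that set of primes.

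The main obstacle is precisely this last point. The ``weak ordinarity'' statement that is needed --- that a smooth projective variety over a number field has ordinary reduction at infinitely many primes --- is itself a deep open conjecture, currently verified only in special classes (abelian varieties and curves, K3 surfaces via Bogomolov--Zarhin, toric varieties, and some Fano cases). Consequently, the honest outcome of this strategy is a conditional theorem: Conjecture \ref{conj.FpureVsLogCanonical} holds for any pair $(Y,\Delta)$ admitting a log resolution for which the weak ordinarity conjecture is available, and in particular in the classes just listed; an unconditional proof is out of reach of present methods, and it is for exactly this reason that the correspondence is stated as a conjecture rather than a theorem.
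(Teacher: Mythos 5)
The paper does not prove this statement: it is stated explicitly as a conjecture (and the paper's discussion around it says the correspondence ``is still conjectural, although an area of active research''). It is only ever \emph{used} as a hypothesis, in Corollary \ref{cor.FpureVsSLC}, so there is no proof in the paper to compare yours against. Judged on its own terms, your assessment of the state of the art is accurate and matches the references the paper points to. The implication ``dense $F$-pure type $\Rightarrow$ log canonical'' is indeed a theorem of Hara--Watanabe \cite{HaraWatanabeFRegFPure}, and your spreading-out argument (fix one log resolution over a finitely generated $\bZ$-algebra, note that discrepancies are preserved on closed fibers for almost all primes, and apply ``$F$-pure $\Rightarrow$ log canonical'' fiberwise) is the standard and correct way to deduce it. The converse is genuinely open, and you correctly identify that the known conditional results (\cite{MustataSrinivasOrdinary}, \cite{MustataOrdinary2}) derive it from the weak ordinarity conjecture.

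Two small cautions on the second half of your sketch. First, the reduction from an arbitrary log canonical pair $(Y,\Delta)$ to ordinarity of a smooth ambient variety does not proceed by induction on log canonical centers via $F$-inversion of adjunction; the arguments of Musta\c{t}\u{a}--Srinivas go through the Frobenius action on (local) cohomology and the comparison of multiplier and test ideals on a log resolution, and extending from the smooth/boundary-free case to general pairs requires additional work beyond what your outline supplies. Second, as the paper itself demonstrates with the counterexample $\bF_2[x,y,z]/(x^2y-z^2)$, $F$-inversion of adjunction fails in general in characteristic $p>0$, so that tool cannot be invoked without the tameness hypotheses that are the subject of this paper. Neither caution changes your conclusion: the statement is a conjecture, one direction is known, and the other is reducible to (and in fact now known to be equivalent to) weak ordinarity, which remains open.
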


See \cite{MustataSrinivasOrdinary} and \cite{MustataOrdinary2} for connections between this conjecture and open conjectures in arithmetic geometry (these connections will not be utilized in this paper).   The setting for our applications will necessarily deal with geometry on non-normal schemes, and as such we will be dealing with an extended notion of divisors, see \cite{HartshorneGeneralizedDivisorsOnGorensteinSchemes, KollarFlipsAndAbundance}. For us, the important divisors are the $\QQ$-almost Cartier divisor (or simply \emph{$\QQ$-\AC divisors}), and we will review the definitions in detail in Section \ref{sec:Divisors}. Our main result implies the following:
\vskip 6pt
\noindent
{\bf{Corollary \ref{cor.FpureVsSLC}}.}
{\it
Assume now that $X$ is an S2, G1 and seminormal variety in characteristic zero.  Further suppose that $\Delta$ is an $\QQ$-\AC divisor (\ie $\QQ$-Weil sheaf) on $X.$
Assuming that Conjecture \ref{conj.FpureVsLogCanonical} holds, $(X, \Delta)$ has semi-log canonical singularities if and only if it has dense $F$-pure type.
}
\vskip 6pt
Another application of Theorem \ref{thm:MainTheorem} is to give a statement of inversion of adjunction for divisors on characteristic $p > 0$ schemes with \hst{} that closely aligns with the characteristic $0$ picture, compare with \cite{KawakitaInversion}.  Inversion of adjunction for characteristic $p > 0$ schemes was studied first in \cite{HaraWatanabeFRegFPure}, see also Takagi \cite{TakagiInversion} and \cite{SchwedeFAdjunction}.  However the direct analog of Kawakita's result is not possible in characteristic $p > 0$ \cite[Example 8.4]{SchwedeFAdjunction}. The culprit is the same conditions of inseparability and wild ramification which obstruct a positive answer to the motivating question.  However:
\vskip 6pt
\noindent
{\bf{Corollary \ref{cor.invofadj}.}}
{\it
Suppose that $X$ is a normal scheme of characteristic $p > 0$, $\Delta$ is a $\QQ$-divisor on $X$ and $S$ is a reduced integral Weil divisor on $X$, with no common components with $\Delta$, such that $K_X + S + \Delta$ is $\QQ$-Cartier with index not divisible by $p$.  Denote by $S^{\textnormal{N}}$ the normalization of $S$ and $\eta \colon S^{\textnormal{N}} \to S \subseteq X$ the natural map. There exists a canonically determined $\QQ$-divisor $\Delta_{S^{\textnormal{N}}}$ on $S^{\textnormal{N}}$ such that $\eta^*( K_X + S + \Delta) \sim K_{S^{\textnormal{N}}} + \Delta_{S^{\textnormal{N}}}$.  Furthermore, if $\O_S$ has \hst, then $(X, S + \Delta)$ is $F$-pure near $S$ if and only if $(S^{\textnormal{N}}, \Delta_{S^{\textnormal{N}}})$ is $F$-pure.
}
\vskip 12pt
\hskip -12pt{\it Acknowledgements:}  The authors would like to thank Bryden Cais, Neil Epstein and Kevin Tucker for inspiring conversations related to this project.  The authors would also like to thank the referee, Shunsuke Takagi and Kevin Tucker for numerous useful comments on an earlier draft of this manuscript.  This work was initiated when the second author was visiting the University of Utah during Fall 2010.

\section{Preliminaries}
\label{sec:prelim}

Throughout this article, all rings will be commutative with unity, Noetherian, excellent and, unless otherwise specified, of prime characteristic and all schemes will be separated and Noetherian.
We frequently use the following notations and conditions on rings and schemes.  Finally, in order to avoid confusion, we remark that we use the notation $\bZ_{(p)}$ to denote the ring of integers $\bZ$ localized at the prime ideal $(p) = p\bZ$.

For a reduced ring $R$ (resp. $X$ is a reduced scheme), we use $K(R)$ (resp. $K(X)$) to denote its total ring of fractions.  Furthermore, we use $R^{\textnormal{N}}$ (resp. $X^{\textnormal{N}}$) to denote the normalization of $R$ in $K(R)$ (resp. the normalization of $X$ in $K(X)$). A ring is called S2 if it satisfies Serre's second condition, \ie the localization at any prime in $R$ of height at least $2$ or $1$ has depth at least $2$ or $1$ respectively.  It is called G1 if it is Gorenstein in codimension 1.  A finite extension of reduced rings $R \subseteq S$ is called \emph{subintegral} (resp. \emph{weakly subintegral}) if it induces a bijection on prime spectra such that the residue field maps are all isomorphisms (resp. purely inseparable).  A reduced ring $R$ is called \emph{seminormal} if it possesses no proper subintegral extensions inside its own field of fractions.  Any reduced ring $R$ has a \emph{seminormalization} $R^{\textnormal{SN}}$ which is the unique largest subintegral extension of $R$ which is contained inside $K(R)$.  For an introduction to seminormalization, see \cite{GrecoTraversoSeminormal} or \cite{VitulliWeakNormalityAndSeminormalitySurvey}.  It is important to note that if $R$ is seminormal then the conductor ideal $\goth{c} = \Ann_R(R^{\textnormal{N}}/R)$ is a radical ideal in both $R$ and $R^{\textnormal{N}}$, see \cite{GrecoTraversoSeminormal} or \cite{HunekeSwansonIntegralClosure}.

For rings of characteristic $p > 0$, we denote by $F \colon R \to R$ the Frobenius homomorphism sending $r$ to its $p$-th power.  Given any $R$-module, $M$, one can view $M$ as an $R$-module via the Frobenius map and obtain a new $R$-module $F_* M$, called the \emph{Frobenius pushforward of $M$}.  This is the $R$-module with the same underlying additive group as $M$ but scalar multiplication is defined by the rule $r \cdot x = r^p x$ for $r \in R$, $x \in M$. One can iterate the Frobenius and for each $e$ we get a Frobenius push forward $F_*^e M$ whose $R$-module structure is defined similarly.

\subsection{Divisors and semi-log canonical singularities}\label{sec:Divisors}

For a scheme $X$ over a field of characteristic $p$, one can ask about the $\O_X$-module structure of $F_*^e \O_X$. Connections between this question and the geometry of $X$ has been a strong guiding force in the study of such characteristic $p > 0$ schemes. For example, $X$ is regular if and only if $F_*^e\O_X$ is a flat $\O_X$-module \cite{KunzCharacterizationsOfRegularLocalRings}. We now review some of these relationships. A scheme $X$ is {\it{F-finite}} provided $F_*^e \O_X$ is coherent, \ie a finitely generated $\O_X$-module for some (equivalently all) $e > 0$.
\smallskip

\noindent
  \begin{tabular}[c]{ll}
    {\scshape Convention:} & \parbox[t]{4.4in}{Throughout this paper, all positive characteristic schemes will be
  assumed to be $F$-finite.}\\
  \end{tabular}
\vskip 3pt
\smallskip

The class of $F$-finite schemes is particularly nice because $F$-finite schemes are abundant, namely varieties over a perfect field are all $F$-finite.  $F$-finite schemes are always locally excellent \cite{KunzOnNoetherianRingsOfCharP} and they always locally have dualizing complexes \cite{Gabber.tStruc}. As in the introduction, we say $X$ is {\it{$F$-split}} provided there is a map $F_* \O_X \to \O_X$ which sends $1$ to $1$ and we say $X$ is {\it{$F$-pure}} provided the Frobenius map on $\O_X$ is a pure morphism.  For $F$-finite affine schemes, $F$-splitting and $F$-purity coincide, see for example \cite{FedderFPureRat} and \cite{HochsterCyclicPurity}.

To describe how these notions relate to characteristic $p > 0$ geometry, we use the setting of pairs.  For a more detailed treatment see \cite{SchwedeFAdjunction,SchwedeTuckerTestIdealFiniteMaps}.  A {\it{prime divisor}} on a normal irreducible scheme $Y$ is a reduced irreducible subscheme of codimension 1 and a {\it{Weil divisor}} is any element of the free Abelian group generated by the prime divisors.  This Abelian group is denoted by $\text{Div}(Y)$. A $\QQ$-divisor is an element of $\tn{Div}(Y) \otimes_\ZZ \QQ$, \ie  a divisor with rational coefficients.  An element $\Delta \in \tn{Div}(Y) \otimes_\ZZ \QQ$ is called an \emph{integral divisor} if it is contained within $\text{Div}(Y)\subseteq \tn{Div}(Y) \otimes_\ZZ \QQ$ (in other words, if it is a Weil divisor, and we wish to emphasize that its coefficients are integral).  A {\it{Cartier divisor}} is a Weil divisor that is locally principal and a $\QQ$-Cartier divisor is a $\bQ$-divisor $D$ such that $mD$ is integral and Cartier for some $0 \neq m \in \ZZ$.  For a $\bQ$-Cartier divisor $D$, the smallest positive integer $m$ such that $mD$ integral and Cartier is called the \emph{index of $D$}.  See \cite{LazarsfeldPositivity2} for additional discussion of $\bQ$-divisors in this context.

We now discuss divisors on non-normal S2 schemes.  One should note that our main theorem can be proven without appeal to these objects, and our main Corollary~\ref{cor.FpureVsSLC} is interesting even when the divisor $\Delta = 0$. Thus the reader not already familiar with divisors on non-normal schemes may wish to skip to Section~\ref{sec.ExtendingNormal} at this point.

\begin{definition} \cite{HartshorneGeneralizedDivisorsOnGorensteinSchemes}, \cite{KollarFlipsAndAbundance}
\label{def.AlmostCartierDivisor}
For an S2 equidimensional reduced scheme $X$, an \emph{\AC divisor} (or \emph{almost Cartier divisor}) is a coherent $\O_X$-module $\sF \subseteq K(X)$ satisfying the following properties
\begin{itemize}
\item[(i)]  $\sF$ is S2.
\item[(ii)]  $\sF_{\eta} \cong \O_{X, \eta}$, abstractly, for all points $\eta \in X$ of codimension 0 or 1.
\end{itemize}
\end{definition}
As was pointed out by the referee, the terminology \emph{almost Cartier} is misleading since any Weil divisor on a normal scheme is almost Cartier (the terminology is taken from \cite{HartshorneGeneralizedDivisorsOnGorensteinSchemes}).  Therefore, in order to avoid this confusion, we instead call almost Cartier divisors by the name \AC divisors.

These divisors form an \emph{additive} group via tensor product up to S2-ification, \cite[Part 2, Section 5.10]{EGAIV}, which we denote by $\AlDiv(X)$.
Given $D \in \AlDiv(X)$, we sometimes use $\O_X(D)$ to denote the sheaf $\sF \subseteq K(X)$ defining $D$.  Note that given any $f \in K(X)$ non-zero at any minimal prime, we use $\divisor(f)$ to denote the element of $\AlDiv(X)$ corresponding to $\sF = {1 \over f} \O_X$.

By a \emph{$\QQ$-\AC divisor} (resp. \emph{$\bZ_{(p)}$-\AC divisor}) we mean an element of $\AlDiv(X) \tensor_\ZZ \QQ =: \QAlDiv(X)$ (resp. $\AlDiv(X) \tensor_\ZZ \bZ_{(p)} =: \ZPAlDiv(X)$).
We say that a divisor $\sF \subseteq K(X)$  of $\AlDiv(X)$ is \emph{effective} if $\O_X \subseteq \sF$ and we say that $\Delta \in \QAlDiv(X)$ (resp. $\ZPAlDiv(X)$) is \emph{effective} if $\Delta = D \tensor \lambda$ for some effective $D \in \AlDiv(X)$ and some $0 \leq \lambda \in \QQ$ (resp. $\bZ_{(p)}$).
We say that two \AC divisors $\sF_1$ and $\sF_2$ are \emph{linearly equivalent} if there is $f$ in $K(X)$ such that $f \sF_1 = \sF_2$.
We call an \AC divisor \emph{Cartier} if the sheaf $\sF$ is a line bundle.
We say that an element $\Delta \in \QAlDiv(X)$ (resp. $\Delta \in \ZPAlDiv(X)$) is \emph{$\bQ$-Cartier} (resp. \emph{$\bZ_{(p)}$-Cartier}) if there exists $0 \neq n \in \bZ$ (resp. $0 \neq n \in \bZ \setminus p\bZ$) such that $n \Delta = C \tensor 1$ for some Cartier divisor $C$ in $\AlDiv(X)$.  We say that two elements $D_1 \tensor \lambda_1, D_2 \tensor \lambda_2 \in \QAlDiv(X)$ (resp. $\ZPAlDiv(X)$) are \emph{$\bQ$-linearly equivalent} (resp. $\bZ_{(p)}$-linearly equivalent), denoted $\sF_1 \tensor \lambda_1 \sim_{\bQ} D_2 \tensor \lambda_2 \Delta_2$ (resp. $ D_1 \tensor \lambda_1 \sim_{\bZ_{(p)}}  D_2 \tensor \lambda_2$), if there exists a non-zero integer $m \in \bZ$ (resp. $\bZ \setminus p\bZ$) such that $m \lambda_i \in \bZ$ and there exists an element $f \in K(X)$ such that $\O_X(m \lambda_1 D_1) \cong f \O_X(m \lambda_2 D_2)$.

\begin{remark}
\label{rem.PathologiesForDivisorsOnNonnormal}
One should note that two distinct \AC divisors of $\AlDiv(X)$ can be identified in $\QAlDiv(X)$, see \cite[Page 172]{KollarFlipsAndAbundance}, and in particular, the natural map $\AlDiv(X) \to \QAlDiv(X)$ is generally not injective unlike the case when $X$ is normal.
\end{remark}

\begin{remark}
Working with $\ZPAlDiv(X)$ is not common.  However, this group behaves much better than $\QAlDiv(X)$ in characteristic $p > 0$ for our purposes, see Theorem \ref{thmDivisorMapCorNonNormal}.  When working on normal varieties, this distinction is less important because $\ZPAlDiv(X)$ as a subset of $\QAlDiv(X)$ is simply the set of divisors whose coefficients do not have $p$ in their denominators.  For non-normal varieties however, the natural map from $\ZPAlDiv(X)$ to $\QAlDiv(X)$ is not generally an injection.
Therefore, when working in characteristic $p > 0$ on \emph{non-normal} varieties, we will generally work with $\ZPAlDiv(X)$.
\end{remark}

Suppose that $X$ is a reduced equidimensional scheme which is S2 and G1.  Further suppose that $X$ possesses a \emph{canonical module $\omega_X$}, \ie a module isomorphic to the first non-zero cohomology of a dualizing complex at each point.  By a canonical divisor $K_X$ we mean any embedding of $\omega_X \subseteq K(X)$ up to multiplication by a unit of $H^0(X, \O_X)$, see \cite{HartshonreGeneralizedDivisorsAndBiliaison} and \cite{HartshorneGeneralizedDivisorsOnGorensteinSchemes}.  Notice that the condition that $X$ is G1 is exactly the condition needed to guarantee Definition \ref{def.AlmostCartierDivisor}(ii).

By a pair, we mean a tuple $(Y,\Delta)$ where $Y$ is a S2 and G1 scheme and $\Delta$ is an element of $\QAlDiv(X)$ or $\ZPAlDiv(X)$.  A pair is called {\it{log $\QQ$-Gorenstein}} (resp. {\it{log $\bZ_{(p)}$-Gorenstein}}) provided $K_Y + \Delta$ is $\QQ$-Cartier (resp. $\bZ_{(p)}$-Cartier).  When $\Delta = 0$ the `log' is omitted and the scheme $Y$ is just called $\QQ$-Gorenstein (resp. $\bZ_{(p)}$-Gorenstein). Now suppose that $Y$ is normal and $\pi \colon \widetilde{Y} \to Y$ is a log resolution, which always exists in characteristic zero by \cite{HironakaResolution}.  Decompose $$K_{\widetilde{Y}} - \pi^*( K_Y + \Delta) = \sum a_i E_i,$$ where $E_i$ is prime and $K_{\widetilde{Y}}$ is a canonical divisor that agrees with $K_Y$ wherever $\pi$ is an isomorphism.  The pair $(Y,\Delta)$ is \emph{log canonical} provided $a_i \geq -1$ for all $i$. 

If $X$ is an S2 but not necessarily normal scheme, denote by $\eta \colon X^{\textnormal{N}} \to X$ its normalization and $B$ the divisor associated to the conductor $\goth{c}$ in $X^{\textnormal{N}}$; \ie $\goth{c} = \O_{X^{\textnormal{N}}}(-B)$.  If additionally $X$ is G1, we say $(X,\Gamma)$ is \emph{semi-log canonical} provided $K_X + \Gamma$ is $\QQ$-Cartier and $(X^{\textnormal{N}}, \Delta = \eta^*\Gamma + B)$ is log canonical.  A more detailed treatment of these concepts in characteristic $0$ can be found in \cite[Chapter 16]{KollarFlipsAndAbundance} or \cite[Chapter 3]{KollarModuliOfSurfacesBook}.

\subsection{$p^{-e}$-linear maps and the map-divisor correspondence}

We call an additive map, $\phi \colon \O_X \to \O_X$, {\it{$p^{-e}$-linear}} provided it locally satisfies $\phi(r^{p^e} \cdot x) = r \phi(x)$. These are identified with the set of $\O_X$-module homomorphisms in $\Hom_{\O_X}(F_*^e \O_X, \O_X)$.  Such maps $\phi$ naturally correspond to effective $\QQ$-divisors $\Delta_{\phi}$ such that $K_X + \Delta_{\phi}$ is $\QQ$-Cartier.  In the normal setting, variants of this correspondence have appeared in many places such as \cite{MehtaRamanathanFrobeniusSplittingAndCohomologyVanishing} and \cite{HaraWatanabeFRegFPure}, this correspondence was recently formalized in \cite{SchwedeFAdjunction}.  However, we need a version of this correspondence in the non-normal setting as well.

First suppose that $X = \Spec R$ is a S2 and G1 reduced and equidimensional affine scheme which is also the spectrum of a semi-local\footnote{The semi-local hypothesis is not needed, see \cite[Remark 9.5]{SchwedeFAdjunction}.  However, we restrict our statements to the semi-local case because they are simpler then.} ring.  The usual arguments still imply that $\sHom_{\O_X}(F^e_* \O_X, \O_X) \cong F^e_* \O_X((1-p^e)K_X)$.  Therefore any section $\phi \in \Hom_{\O_X}(F^e_* \O_X, \O_X)$ which is non-zero at each generic point of $X$, induces an effective \AC divisor $D_{\phi} \sim (1-p^e)K_X$ via \cite[Proposition 2.9]{HartshorneGeneralizedDivisorsOnGorensteinSchemes} and \cite[Remark 2.9]{HartshonreGeneralizedDivisorsAndBiliaison} (for example, choose the embedding of $\O_X((1-p^e)K_X)$ into $K(X)$ which sends $\phi$ to $1$).  We define the divisor $\Delta_{\phi}$ associated to $\phi$ to be the element $D_{\phi} \tensor {1 \over p^e - 1} \in \ZPAlDiv(X)$.

Conversely, suppose we are given an effective $\bZ_{(p)}$-\AC divisor $\Delta$ such that $m \Delta = D \tensor 1$ for some integer $m > 0$ which is not divisible by $p$ and for some $D \in \AlDiv(X)$.  Additionally suppose that $-mK_X \sim D$.  Thus $D$ corresponds to a section $\eta \in \O_X(-m K_X)$, up to multiplication by a unit of $H^0(X, \O_X)$.  Without loss of generality, we may assume that $m = p^e - 1$ for some $e \gg 0$ and so we may view $\eta$ as an element $F^e_* \O_X( (1-p^e) K_X) \cong \Hom_{\O_X}(F^e_* \O_X, \O_X)$. These two observations lead us to the following correspondence.

\begin{thm}
\label{thmDivisorMapCorNonNormal}
For $R$ a semilocal $F$-finite reduced, S2, and G1 ring and set $X = \Spec R$. We have the following bijection of sets:
\[
\left\{ \begin{matrix}\text{Effective $\Delta \in \ZPAlDiv(X)$}\\ \text{ such that} \\ \text{$K_X + \Delta$ is $\bZ_{(p)}$-Cartier} \end{matrix} \right\} \leftrightarrow \left\{ \begin{matrix}\text{For each $e > 0$, elements of }\\ \text{ $\Hom_{\O_X}(F^e_* \O_X, \O_X)$ which are } \\ \text{ non-zero on every irreducible } \\ \text{component of $X$} \end{matrix} \right\} \Bigg/ \sim
\]
Here the equivalence relation $\sim$ on the right is generated by two equivalences.
\begin{itemize}
\item[(i)]  We say that $\phi_1, \phi_2 \in \Hom_{\O_X}(F^e_* \O_X, \O_X)$ are equivalent if there exists a unit $u$ in $H^0(X, F^e_* \O_X)$ such that $\phi_1(u \cdot \blank) = \phi_2(\blank)$.
\item[(ii)]  We say that $\phi \in \Hom_{\O_X}(F^e_* \O_X, \O_X)$ and $\phi^{n} := \phi \circ F^e_* \phi \circ \dots \circ F^{(n-1)e}_* \phi$ in $\Hom_{\O_X}(F^{ne}_*  \O_X, \O_X)$ are equivalent.
\end{itemize}
\end{thm}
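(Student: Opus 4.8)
The plan is to write down explicit maps in both directions and check that, after quotienting the right-hand set by $\sim$, they are mutually inverse; essentially all the work goes into showing that the map sending $\phi$ to its divisor is insensitive to the two elementary equivalences. The two tools I would use throughout are the isomorphism $\sHom_{\O_X}(F^e_*\O_X,\O_X)\cong F^e_*\O_X((1-p^e)K_X)$ recalled before the statement (Grothendieck duality for the finite morphism $F^e$ plus the S2, G1 hypotheses and the existence of $\omega_X$) and the fact that an \AC divisor, being an S2 subsheaf of $K(X)$, is determined by its restriction to the set of points of codimension $\le 1$. For the forward map, given $e>0$ and $\phi\in\Hom_{\O_X}(F^e_*\O_X,\O_X)$ nonzero at every generic point, I would transport $\phi$ through the duality isomorphism to a section of $\O_X((1-p^e)K_X)$ nonzero at every generic point and let $D_\phi\sim(1-p^e)K_X$ be the effective \AC divisor it determines by \cite[Proposition 2.9]{HartshorneGeneralizedDivisorsOnGorensteinSchemes} and \cite[Remark 2.9]{HartshonreGeneralizedDivisorsAndBiliaison}; here $D_\phi$ is well defined up to a unit of $H^0(X,\O_X)$, and two sections with the same associated divisor differ exactly by such a unit. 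Setting $\Delta_\phi:=D_\phi\tensor\tfrac1{p^e-1}$ --- which lies in $\ZPAlDiv(X)$ precisely because $p^e-1$ is a unit in $\bZ_{(p)}$ --- one gets an effective $\bZ_{(p)}$-\AC divisor with $(p^e-1)(K_X+\Delta_\phi)=(p^e-1)K_X+D_\phi\sim0$, so $K_X+\Delta_\phi$ is $\bZ_{(p)}$-Cartier, and the ``same divisor means same up to a unit'' remark already shows $\phi\mapsto\Delta_\phi$ is constant on classes for equivalence (i).

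The crux is the invariance $\Delta_{\phi^n}=\Delta_\phi$ under equivalence (ii). By the S2 reduction it is enough to prove the identity $D_{\phi^n}=\tfrac{p^{ne}-1}{p^e-1}D_\phi$ after localizing at a point of codimension $\le 1$, where $X$ is Gorenstein by G1; there $\O_X((1-p^e)K_X)$ is free, $\Hom_{\O_X}(F^e_*\O_X,\O_X)$ is free of rank one over $F^e_*\O_X$ on a canonical generator, and $\phi$ is that generator precomposed with multiplication by a nonzerodivisor $f$ with $\divisor(f)=D_\phi$. I would then chase $\phi^n=\phi\circ F^e_*\phi\circ\dots\circ F^{(n-1)e}_*\phi$ through these identifications, exactly as in the normal case of \cite{SchwedeFAdjunction}, to see that $\phi^n$ is the canonical generator of $\Hom_{\O_X}(F^{ne}_*\O_X,\O_X)$ precomposed with multiplication by $f^{1+p^e+\dots+p^{(n-1)e}}=f^{(p^{ne}-1)/(p^e-1)}$; this yields the identity and hence $\Delta_{\phi^n}=D_{\phi^n}\tensor\tfrac1{p^{ne}-1}=D_\phi\tensor\tfrac1{p^e-1}=\Delta_\phi$, and the same chase shows $\phi^n$ is again nonzero at every generic point. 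Combined with the previous paragraph, $\phi\mapsto\Delta_\phi$ descends to a well-defined map on the quotient by $\sim$.

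For the inverse, given an effective $\Delta\in\ZPAlDiv(X)$ with $K_X+\Delta$ being $\bZ_{(p)}$-Cartier, I would choose an integer $b$ coprime to $p$ so that $b\Delta$ is an effective integral \AC divisor, and an integer $n$ coprime to $p$ with $n(K_X+\Delta)=C\tensor 1$ for a Cartier divisor $C$; the semilocal hypothesis forces $C$ to be principal, so $C\sim0$. Picking $e$ with $\operatorname{lcm}(b,n)\mid p^e-1$ (possible since $\operatorname{lcm}(b,n)$ is coprime to $p$), the divisor $D:=(p^e-1)\Delta$ is an effective integral \AC divisor with $D\sim(1-p^e)K_X$, so composing $\O_X\hookrightarrow\O_X(D)$ with an isomorphism $\O_X(D)\cong\O_X((1-p^e)K_X)$ (unique up to a unit) produces, up to a unit, a map $\phi_\Delta\in\Hom_{\O_X}(F^e_*\O_X,\O_X)$ nonzero at every generic point, whose $\sim$-class is independent of the choice of $e$ by the computation of the previous paragraph. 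Then $D_{\phi_\Delta}=D=(p^e-1)\Delta$ gives $\Delta_{\phi_\Delta}=\Delta$, while running the construction of $\phi_\Delta$ on $\Delta_\phi$ (with the same $e$) recovers the section with divisor $(p^e-1)\Delta_\phi=D_\phi$, which is $\phi$ up to a unit and hence equivalent to $\phi$ under (i). This shows the two maps are mutually inverse, completing the proof.

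The step I expect to be the main obstacle is the Frobenius-composition compatibility $\Delta_{\phi^n}=\Delta_\phi$: it requires making the duality isomorphism explicit enough to follow the composite $\phi\circ F^e_*\phi\circ\cdots$ through it, which is only transparent on the Gorenstein codimension-$\le 1$ locus, and then globalizing via the S2 property of \AC divisors. A secondary but genuine point is the use of the semilocal hypothesis to upgrade ``$\bZ_{(p)}$-Cartier'' to an honest linear equivalence $(p^e-1)(K_X+\Delta)\sim0$, which is what lets one realize $\Delta$ by an actual section of the relevant dualizing-type sheaf.
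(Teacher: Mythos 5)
Your constructions in both directions, and your key computation $D_{\phi^n}=(1+p^e+\cdots+p^{e(n-1)})D_\phi$ on the Gorenstein codimension-one locus, coincide with the paper's. But there is a genuine gap at the injectivity step, and it is exactly the subtlety the paper's proof is built around. You assert that two sections with the same associated divisor differ by a unit, and conclude that running your inverse construction on $\Delta_\phi$ ``recovers the section with divisor $(p^e-1)\Delta_\phi=D_\phi$, which is $\phi$ up to a unit.'' The unit-multiple statement is true for honest \AC divisors in $\AlDiv(X)$, but the bijection is stated at the level of $\ZPAlDiv(X)=\AlDiv(X)\tensor_{\bZ}\bZ_{(p)}$, and for non-normal $X$ the localization map $\AlDiv(X)\to\ZPAlDiv(X)$ is generally not injective: $\AlDiv(X)$ can have torsion of order prime to $p$ (Remark \ref{rem.PathologiesForDivisorsOnNonnormal}). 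Consequently, if $\Delta_{\phi_1}=\Delta_{\phi_2}$ in $\ZPAlDiv(X)$ you only obtain $mD_1=mD_2$ in $\AlDiv(X)$ for some $m>0$ prime to $p$, not $D_1=D_2$; and in your inverse construction the integral \AC divisor representing $(p^e-1)\Delta$ is determined only up to such torsion, so the section you produce need not be a unit multiple of $\phi$. As written, injectivity (equivalently, that the round trip lands in the $\sim$-class of $\phi$) is unproved.

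The repair is to deploy your power computation at precisely this point, which is what the paper does: enlarge $m$ so that $m=1+p^e+\cdots+p^{e(d-1)}$ for some $d>0$ (possible because a suitable multiple of the original $m$ has this form, $p$ being invertible modulo $m$). Then $D_{\phi_1^d}=mD_1=mD_2=D_{\phi_2^d}$ as genuine elements of $\AlDiv(X)$, whence $\phi_1^d$ and $\phi_2^d$ differ by a unit and $\phi_1\sim\phi_2$ via relation (ii) followed by relation (i). With that insertion your argument closes and is essentially the paper's; without it, the torsion in $\AlDiv(X)$ leaves a real hole. Your remaining points (the use of the semilocal hypothesis to make the $\bZ_{(p)}$-Cartier divisor principal, the reduction of the composition formula to codimension one via S2 and G1) are correct and agree with the paper.
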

\begin{proof}
The proof is more subtle than in the normal case:  if $\phi_1$ and $\phi_2 $ are elements of $\Hom_{\O_X}(F^e_* \O_X, \O_X)$ that determine the same $\bZ_{(p)}$-divisor, it does not imply that one is a unit multiple of the other as is the case for normal rings. The reason for this is that there can be torsion in $\AlDiv(X)$.

We certainly know that every $p^{-e}$-linear map $\phi$ as described above induces an effective $\Delta$ in $\ZPAlDiv(X)$ by the procedure described above.  Furthermore, this procedure is surjective by \cite[Proposition 2.9]{HartshorneGeneralizedDivisorsOnGorensteinSchemes}.  Thus we simply have to show that the two equivalence relations described in (i) and (ii) above are sufficient to induce a bijection.  So suppose that $\phi_1 \colon F^{e_1}_* R \to R$ and $\phi_2 \colon F^{e_2}_* R \to R$ induce the same divisor $\Delta$.  By using (ii), we may assume that $e_1 = e_2 = e$.  Thus we have two sections $\phi_1, \phi_2 \in \Hom_R(F^e_* R, R)$ inducing the same $\bZ_{(p)}$-\AC divisors in $\ZPAlDiv(X) = \AlDiv(X) \tensor_{\bZ} \bZ_{(p)}$, say $D_1 \tensor {1 \over p^e - 1}$ and $D_2 \tensor {1 \over p^e - 1}$ respectively.
It follows that there is an integer $m > 0$ not divisible by $p > 0$ such that $m D_1 = m D_2$.  By making $m$ bigger if necessary, we may take $m = p^{e(d-1)} + p^{e(d-2)} + \dots + p^e + 1$ for some $d > 0$.  As a section, we claim that $\phi_i^d \in \Hom_{\O_X}(F^{ed}_* \O_X, \O_X)$ induces the divisor
\[
D_{\phi_i^d} := (p^{e(d-1)} + p^{e(d-2)} + \dots + p^e + 1) D_{i}
\]
in $\AlDiv(X)$ via \cite[Proposition 2.9]{HartshorneGeneralizedDivisorsOnGorensteinSchemes}.

We now prove this claim.  Since divisors are determined in codimension $1$, we may assume that $X = \Spec R$ where $R$ is a $1$-dimensional Gorenstein ring.  Because $R$ is Gorenstein, by duality for a finite map, $\Hom_R(F^e_* R, R)$ is isomorphic to $F^e_* R$ (abstractly).  Choose $\Phi$ to be an $F^e_* R$-module generator of $\Hom_R(F^e_* R, R)$.  We may thus write $\phi_i(\blank) = \Phi(c_i \cdot \blank)$ for some $c_i \in F^e_* R$.  Notice $D_i = \divisor(c_i)$ which corresponds to the sheaf ${1 \over c_i}R$. It follows that
\[
\phi_i^d\left(\blank\right) = \Phi\left(c_i F^e_* \Phi(c_i F^e_* \dots \Phi(c_i F^e_* \Phi(c_i \cdot \blank)))\right) = \Phi^d\left(c_i^{p^{e(d-1)} + p^{e(d-2)} + \dots + p^e + 1} \cdot \blank \right).
\]
Therefore, because $\Phi^d$ generates $\Hom_R(F^{ed}_* R, R)$ as an $F^{ed}_* R$-module by \cite[Appendix F]{KunzKahlerDifferentials} or \cite[Lemma 3.9]{SchwedeFAdjunction},
\begin{equation}
\label{eqnDivisorToPower}
D_{\phi_i^d} = \divisor(c_i^{p^{e(d-1)} + p^{e(d-2)} + \dots + p^e + 1}) = (p^{e(d-1)} + p^{e(d-2)} + \dots + p^e + 1) D_{i}
\end{equation}
which proves the claim.

Thus $\phi_1^d$ and $\phi_2^d$ agree up to multiplication by a unit and so $\phi_1$ and $\phi_2$ are indeed related by relations (i) and (ii). We also have to verify that if $\phi_1$ and $\phi_2$ are related by conditions (i) or (ii), then they induce the same element of $\ZPAlDiv(X)$.  Certainly condition (i) is harmless.  To check condition (ii), one simply has to tensor equation (\ref{eqnDivisorToPower}) by $1 \over p^{ed - 1}$.
\end{proof}

\begin{remark}
If $R$ is not semilocal but additionally there is a quasi-isomorphism $(F^e)^! \omega_R^{\mydot} \qis \omega_R^{\mydot}$ (which occurs for example, if $R$ is essentially of finite type over a field, or more generally a Gorenstein local ring), then the theorem above still holds as long as one restricts the left-hand-side to
\[
\left\{ \begin{matrix}\text{Effective $\Delta \in \ZPAlDiv(X)$}\\ \text{ such that} \\ \text{$K_X + \Delta \sim_{\bZ_{(p)}} 0$ } \end{matrix} \right\}
\]
Of course, the $(\leftarrow)$ direction of the correspondence in Theorem \ref{thmDivisorMapCorNonNormal} always exists.  Alternately, see \cite[Remark 9.5]{SchwedeFAdjunction}.
\end{remark}

\begin{definition}
Suppose that $R$ is a reduced $F$-finite local ring of characteristic $p > 0$.  Fix a map $\phi \colon F^e_* R \to R$.
The pair $(R, \phi)$ is called \emph{$F$-pure} if $\phi$ is surjective.

Suppose that $X$ is an S2, G1 and reduced $F$-finite scheme and $\Gamma \in \ZPAlDiv(X)$ corresponds at each point $x \in X$ to a map $\phi_x \colon F^e_* \O_{X,x} \to \O_{X,x}$ via Theorem \ref{thmDivisorMapCorNonNormal}.  The pair $(X, \Gamma)$ is called \emph{$F$-pure} if $(\O_{X,x}, \phi_x)$ is $F$-pure for all $x \in X$.
\end{definition}

\begin{remark}
While there is substantial freedom in the choice of $\phi_x$ associated to $\Gamma$ at each point, it is straightforward to verify that if two maps $\phi_x$ and $\phi_x'$ are related via the two conditions (i) and (ii) in Theorem \ref{thmDivisorMapCorNonNormal}, then $\phi_x$ is surjective if and only $\phi_x'$ is surjective.
\end{remark}

\subsection{Extending $p^{-e}$-linear maps along finite morphisms}
\label{sec.ExtendingNormal}

For a finite inclusion of rings $R \subset S$ and an $R$-linear map $\phi \colon F_*^e R \to R$ one can ask when there is a $S$-linear map $\overline{\phi} \colon F_*^e S \to S$ so that $\overline{\phi} |_R  = \phi$. The study of such extensions of maps in the case where $R$ and $S$ are both normal domains is carefully laid out in \cite{SchwedeTuckerTestIdealFiniteMaps}.  However, even in that case, an arbitrary $p^{-e}$-linear map on $R$ need not extend to a $p^{-e}$-linear map on $S$.

\begin{xmp}\cite[Example 3.4]{SchwedeTuckerTestIdealFiniteMaps} Consider for example $p = 3$, $e = 1$ and the inclusion $R = \mbf{F}_3[x^2] \subset \mbf{F}_3[x] = S$. An $R$-basis for $F_*R$ is $\{1,x^2,x^4\}$ and an $S$-basis for $F_*S$ is $\{1,x,x^2\}$. Any map $\phi \colon F_*R \to R$ is defined by the images $\phi(1), \phi(x^2), \phi(x^4)$. A map $\overline{\phi} \colon F_*S \to S$ which extends $\phi$ must agree with $\phi$ on $\{1,x^2,x^4\}$. However, $\phi(x^4) = \overline{\phi}(x^4) = x\overline{\phi}(x)$ since $\overline{\phi}$ is $p^{-e}$-linear and so $\overline{\phi}(x) = \phi(x^4)/x$. Thus when $\phi(x^4)$ is not divisible by $x$, $\phi$ cannot extend.
\end{xmp}

Generic separability is necessary to guarantee extensions of maps \cite[Proposition 5.1]{SchwedeTuckerTestIdealFiniteMaps}. When the inclusion of fraction fields is not generically separable then only the zero map extends. This separability allows one to use the trace map as a vehicle for understanding such extensions in the normal case.  In the next section, where we address out motivating question, we will see that the non-normal setting is even more complicated.

\section{Extending $p^{-e}$-linear maps in non-normal rings}
\label{sec:main}

For an affine variety $X = \Spec R$ denote by $X^{\textnormal{N}} = \Spec R^{\textnormal{N}}$ the normalization. If we set $\goth{c}$ to be the conductor (the largest ideal of $R$ that is also an ideal of $R^{\textnormal{N}}$) the inclusion $R \hookrightarrow R^{\textnormal{N}}$ extends to the following commutative diagram where the inclusions are the obvious ones and the other maps are the natural surjections.

\begin{center}
\begin{tikzpicture}
\node (A) at (-1,1) {$R^{\textnormal{N}}$};
\node (B) at (-1,-1){$R$};
\node (C) at (1,0){$R^{\textnormal{N}}/\goth{c}$};
\node(D) at (1,-2){$R/\goth{c}$};
\path[right hook->] (B) edge (A);
\path[->>] (A) edge (C);
\path[->>] (B) edge (D);
\path[right hook->] (D) edge (C);
\end{tikzpicture}
\end{center}

Recall also the following Lemma.

\begin{lem}\cite[Exercise 1.2.4(E)]{BrionKumarFrobeniusSplitting}
\label{lem:ExtendingMapsUnderNormalization}
Suppose that $R$ is a non-normal reduced ring, $R^{\textnormal{N}}$ is its normalization and $\goth{c}$ is the conductor ideal.  Any $R$-linear map $\phi \colon F^e_* R \to R$ is compatible with $\goth{c}$ (\ie $\phi(F^e_* \goth{c}) \subseteq \goth{c}$) and extends uniquely to an $R^{\textnormal{N}}$-linear map $\overline{\phi} \colon F^e_* R^{\textnormal{N}} \to R^{\textnormal{N}}$.
\end{lem}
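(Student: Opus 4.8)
Here is how I would approach Lemma~\ref{lem:ExtendingMapsUnderNormalization}.

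\medskip

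\noindent\emph{Proof proposal.} The plan is to push everything onto the total ring of fractions $K := K(R) = K(R^{\textnormal{N}})$. The nonzerodivisors $W$ of $R$ coincide with those of $R^{\textnormal{N}}$, and Frobenius pushforward commutes with this localization (inverting $W$ on $F^e_* R$ amounts to inverting $\{w^{p^e} : w \in W\}$ on the ring $R$, which cuts out the same fractions); so tensoring $\phi$ with $K$ produces a $p^{-e}$-linear map $\phi_K \colon F^e_* K \to K$ with $\phi_K|_{F^e_* R} = \phi$. Any $R^{\textnormal{N}}$-linear extension $\overline{\phi} \colon F^e_* R^{\textnormal{N}} \to R^{\textnormal{N}}$ of $\phi$ localizes to a $K$-linear map $F^e_* K \to K$ extending $\phi$, and there is only one such map, since $F^e_* R$ generates $F^e_* K$ over $K$. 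Hence $\overline{\phi}$, if it exists, is forced to equal the restriction of $\phi_K$ to $F^e_* R^{\textnormal{N}} \subseteq F^e_* K$. This gives uniqueness immediately and reduces existence to the single claim $\phi_K(F^e_* R^{\textnormal{N}}) \subseteq R^{\textnormal{N}}$.

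I would establish the conductor compatibility next, since it uses only $\phi_K$ and not the existence of $\overline{\phi}$. Fix $c \in \mathfrak{c}$ and $x \in R^{\textnormal{N}}$. Because $\mathfrak{c}$ is an ideal of $R^{\textnormal{N}}$ we have $x^{p^e} c \in \mathfrak{c} \subseteq R$, so $\phi\bigl(F^e_*(x^{p^e} c)\bigr) \in R$. On the other hand, inside $K$ the $p^{-e}$-linearity of $\phi_K$ over $K \ni x$ gives $\phi_K\bigl(F^e_*(x^{p^e} c)\bigr) = x\,\phi_K(F^e_* c) = x\,\phi(F^e_* c)$, the last two terms being computed on $F^e_* R$. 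Therefore $x\,\phi(F^e_* c) \in R$ for every $x \in R^{\textnormal{N}}$; taking $x = 1$ gives $\phi(F^e_* c) \in R$, and allowing general $x$ gives $\phi(F^e_* c)\,R^{\textnormal{N}} \subseteq R$, i.e.\ $\phi(F^e_* c) \in \mathfrak{c}$. Thus $\phi(F^e_* \mathfrak{c}) \subseteq \mathfrak{c}$, and in particular $\phi$ descends to $\overline{\phi}{}' \colon F^e_*(R/\mathfrak{c}) \to R/\mathfrak{c}$.

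For the containment $\phi_K(F^e_* R^{\textnormal{N}}) \subseteq R^{\textnormal{N}}$ I would localize to codimension one. Since $R^{\textnormal{N}}$ is normal it is the intersection, inside $K$, of its localizations at height-one primes, so it suffices to check the containment after localizing at each height-one prime $\mathfrak{q}$ of $R^{\textnormal{N}}$. Localization commutes with normalization, with formation of the conductor, and with forming $\phi_K$, and the contraction of $\mathfrak{q}$ again has height one; after also completing we may assume $R$ is a complete reduced local ring of dimension one, so that $R^{\textnormal{N}}$ is a finite product of complete discrete valuation rings and $\mathfrak{c}$ is a product of powers of their maximal ideals. In this situation I would argue directly, the essential input being that the target of $\phi$ is $R$ and not merely $R^{\textnormal{N}}$: applying the exact functor $F^e_*$ to the conductor (Milnor) square $R = R^{\textnormal{N}} \times_{R^{\textnormal{N}}/\mathfrak{c}} (R/\mathfrak{c})$ keeps it cartesian, and combining this with the descended map $\overline{\phi}{}'$ pins down $\phi_K$ on each discrete-valuation-ring factor and forces its values to lie in that factor.

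This last step is the main obstacle. The estimate that the conductor gives for free, namely $c^{p^e}\,\phi_K(F^e_* x) = \phi\bigl(F^e_*(c^{p^e}x)\bigr) \in \mathfrak{c}$ for all $c \in \mathfrak{c}$ and $x \in R^{\textnormal{N}}$, only places $\phi_K(F^e_* R^{\textnormal{N}})$ inside a divisorial fractional ideal (roughly $\mathfrak{c}^{(1-p^e)}$), which is strictly larger than $R^{\textnormal{N}}$ as soon as $\mathfrak{c}$ has a height-one component, i.e.\ whenever $R$ already fails to be normal in codimension one. Escaping this requires genuinely using the ring-theoretic gluing constraints encoded by the conductor square together with the fact that $\phi$ lands in the subring $R$; everything else in the argument is formal.
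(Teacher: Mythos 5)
Your reduction to the total quotient ring, your uniqueness argument, and your proof of the conductor compatibility $\phi(F^e_*\mathfrak{c})\subseteq\mathfrak{c}$ are all correct and match the standard argument. But the existence of the extension --- the containment $\phi_K(F^e_* R^{\textnormal{N}})\subseteq R^{\textnormal{N}}$ --- is the entire content of the lemma, and you do not prove it: the proposed localization to height-one primes followed by a Milnor-square analysis is never carried out (``pins down $\phi_K$ \ldots and forces its values to lie in that factor'' is an assertion, not an argument), and you yourself flag this step as ``the main obstacle.'' As written, the proposal establishes only the compatibility claim and the uniqueness claim.

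The irony is that the estimate you dismiss in the final paragraph as too weak is miscomputed, and its correct form finishes the proof on the spot. By $p^{-e}$-linearity, $\phi_K\bigl(F^e_*(c^{p^e}x)\bigr)=c\,\phi_K(F^e_* x)$, not $c^{p^e}\,\phi_K(F^e_* x)$ (compare with your own computation $\phi_K(F^e_*(x^{p^e}c))=x\,\phi_K(F^e_*c)$ two paragraphs earlier). So for $c\in\mathfrak{c}$ and $x\in R^{\textnormal{N}}$, writing $y=\phi_K(F^e_* x)$, you get $c\,y=\phi\bigl(F^e_*(c^{p^e}x)\bigr)\in\phi(F^e_*\mathfrak{c})\subseteq\mathfrak{c}$, using that $c^{p^e}x\in\mathfrak{c}$ (as $\mathfrak{c}$ is an $R^{\textnormal{N}}$-ideal) together with the compatibility you already proved. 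Hence $\mathfrak{c}\,y\subseteq\mathfrak{c}$, and since $\mathfrak{c}$ is a finitely generated $R$-module containing a nonzerodivisor of $K$, the determinant trick shows $y$ is integral over $R$ and therefore lies in $R^{\textnormal{N}}$. No localization, completion, or conductor-square gluing is needed. This is essentially the argument of the reference the paper cites for this lemma (\cite[Propositions 7.10, 7.11]{SchwedeCentersOfFPurity}); the paper itself gives no proof beyond that citation.
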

\begin{proof}
A proof can be found in \cite[Propositions 7.10, 7.11]{SchwedeCentersOfFPurity}.
\end{proof}

Suppose we are given a map $\phi \colon F^e_* R \to R$ as above.  Lemma \ref{lem:ExtendingMapsUnderNormalization} implies that there exists a map $\overline{\phi} \colon F^e_* R^{\textnormal{N}} \to R^{\textnormal{N}}$ and furthermore that the induced map $\phi_{\goth{c}} \colon F^e_* (R/\goth{c}) \to R/\goth{c}$ extends to an $R^{\textnormal N}/\goth{c}$-linear map $\overline{\phi}_{\goth{c}} \colon F^e_* (R^{\textnormal{N}}/\goth{c}) \to R^{\textnormal{N}}/\goth{c}$.  In order to prove our main theorem, we will relate the surjectivity of $\phi$ to that of $\overline{\phi}$ by studying the surjectivity of $\phi_{\goth{c}}$ verses $\overline{\phi}_{\goth{c}}$.

Following the ideas of \cite{SchwedeTuckerTestIdealFiniteMaps}, it is natural to attempt to apply the trace map (for the inclusion $R/\goth{c} \subseteq R^{\textnormal{N}}/\goth{c}$) to solve this problem.  More generally, for a finite inclusion $A \subset B$ of reduced rings $A$ and $B$ where each minimal prime of $B$ lies over a minimal prime of $A$, one can ask whether the trace map $\Tr \colon \tn{Frac}(B) \to \tn{Frac}(A)$ restricts to a \emph{surjective} map from $B$ to $A$ (here $\Tr \colon \tn{Frac}(B) \to \tn{Frac}(A)$ is defined to be the sum of the individual field trace maps).  If this were always the case in our setting, one would have a \emph{surjective} trace map $\Tr \colon R^{\textnormal{N}}/\goth{c} \to R/\goth{c}$ and one could use this show that surjectivity of $\phi$ directly. However, the next examples show the trace map can fail to be surjective (or even fail to induce a map from $B$ to $A$) for inclusions $A \subset B$.

\begin{xmp}
Suppose that $k$ is a perfect field of characteristic $2$ and consider the ring $R = k[x,y,z]/(xz^2 - y^2) \cong k[a^2, ab, b] \subseteq k[a,b]$.  The conductor ideal is $\goth{c} = (b, ab)$ and so $k[a^2] \cong R/\goth{c} \subseteq R^{\textnormal{N}}/\goth{c} \cong k[a]$ is generically purely inseparable.  Thus the trace map $\Tr \colon R^{\textnormal{N}}/\goth{c} \to R/\goth{c}$ is the zero map (and in particular, not surjective).  Likewise, one can construct similar examples of Galois extensions $R/\goth{c} \subseteq R^{\textnormal{N}}/\goth{c}$ of Dedekind domains which are generically separable (and so the trace map is non-zero) but which have wild ramification and so the trace map is not surjective.
\end{xmp}

\begin{xmp}
Consider the extension $A = k[x^2] \subseteq k[x] = B$ where the characteristic of $k$ is \emph{not} equal to $2$.  The trace map $\Tr \colon k(x) \to k(x^2)$ yields $\Tr(B) = A$.  However, we also have the inclusion $A' = k[(x^4 - x^2), x^2(x^4 - x^2)] \subseteq k[x] = B$ noting that $k[(x^4 - x^2), x^2(x^4 - x^2)]$ and $k[x^2]$ have the same fraction field, $k(x^2)$.  Therefore $\Tr(B) \supsetneq A'$.  By using pushout diagrams of schemes (as in \cite{SchwedeGluing}) one can construct a ring $R$ where $A' = R/\goth{c} \subseteq R^{\textnormal{N}} / \goth{c} = B$.
\end{xmp}

\begin{xmp}
\label{xmp1}
Let $k$ be any field and consider rings $$A = \{ (s,t) \in k[x^2] \oplus k[y] \mid \text{$s$ and $t$ have the same constant term} \}$$ and $$B =  \{ (u,v) \in k[x] \oplus k[y] \mid \text{$u$ and $v$ have the same constant term} \}$$ over $k$ both having a node at the origin.  Consider the normalizations $A^{\textnormal{N}}=k[x^2] \oplus k[y]$ and $B^{\textnormal{N}}=k[x]\oplus k[y]$ respectively.  One can see the conductor of $A$ in $A^{\textnormal{N}}$ is the ideal made up of all pairs $(s,t)$ with zero constant term. Likewise the conductor of $B$ in $B^{\textnormal{N}}$ is the ideal made up of all pairs $(u,v)$ with no constant term.

In this example, we consider the trace on each irreducible component, and then add the resulting trace maps.  Somewhat abusively, we call this sum the ``trace'' also, and denote it by $\Tr$.  Clearly this trace map sends the conductor to the conductor but $\Tr(B)$ is not contained in $A$ because $\Tr(1,1)=(2,1)\notin A$.  
Compare with the question ``Trace map attached to a finite homomorphism of noetherian rings'' of Bryden Cais on {\tt{http://mathoverflow.net}} asked on December 3rd, 2009.
\end{xmp}

In light of these examples, the condition that $\Tr(R^{\textnormal{N}}/\goth{c}) = R/\goth{c}$ is too restrictive.  However, the following (recursive) definition, which is substantially weaker, will be exactly what we want in order to answer our motivating question.

\begin{dff}
\label{def.hst}
Suppose that $R$ is a reduced local ring and $X = \Spec R$.  Define $X^{\textnormal{N}} = \Spec R^{\textnormal{N}}$ to be the normalization with conductor $\goth{c}$.  We set $B \subseteq X^{\textnormal{N}}$ and $C \subseteq X$ to be the subschemes defined by $\goth{c}$ and set $B_{\red}$ and $C_{\red}$ to be the associated reduced subschemes.  We say $X$ has {\it{hereditary surjective trace}} provided that there is some irreducible component $C_i$ of $C_{\red}$ dominated by an irreducible component $B_i$ of $B_{\red}$ such that:
\begin{itemize}
\item[(i)]  the induced trace map $\Tr \colon \O_{B_i^{\textnormal{N}}} \to \O_{C_i^{\textnormal{N}}}$ is surjective and
\item[(ii)]  $C_i$ also has \hst{} (this condition is vacuous if $C_i$ is normal).
\end{itemize}
In the language of commutative algebra, a ring having \emph{\hst} means that there exist minimal associated prime ideals of $\goth{c}$ (which is an ideal of both $R$ and $R^{\textnormal{N}}$),  $\goth{p} \subset R$ and $\goth{q} \subset R^{\textnormal{N}}$ such that $R \cap \goth{q} = \goth{p}$ satisfying (i) \mbox{$\Tr \colon (R^{\textnormal{N}}/\goth{q})^{\textnormal{N}} \to (R/\goth{p})^{\textnormal{N}}$} is surjective and (ii) $R/\goth{p}$ also has \hst.

We say that a (non-local) scheme $X = \Spec R$ has \emph{\hst} if it has \hst{} at every point.
\end{dff}

\begin{remark}
Observe that the dimension of $C$ is strictly smaller than that of $X$, and so the recursive process of Definition \ref{def.hst} will stop after finitely many steps.
\end{remark}

\begin{remark}
\label{rem.StrongerHST}
It would also be natural to require conditions (i) and (ii) for \emph{every} irreducible component $C_i$ of $C_{\red}$ dominated by an irreducible component $B_i$ of $B_{\red}$.  However, we will not need this stronger condition.
\end{remark}

We consider the following special case of our main result whose proof we feel is illuminating.

\begin{xmp}
\label{xmp:basecase}
Recall that any complete one dimensional seminormal ring of characteristic $p > 0$ with algebraically closed residue field $k$ is isomorphic to the completion of the coordinate ring of some set of coordinate axes in $\mathbb{A}_k^{n}$ by \cite{VitulliMulticross}.

Suppose that $R$ is an $F$-finite complete two-dimensional S2 seminormal ring with algebraically closed residue field.  Use $R^{\textnormal{N}}$ to denote the normalization of $R$ and suppose that $\phi \colon F^e_* R^{\textnormal{N}} \to R^{\textnormal{N}}$ is a surjective map which is compatible with the conductor ideal $\goth{c}$.  Because $R$ is S2, $R^{\textnormal{N}} / \goth{c}$ is equidimensional and reduced.  Therefore, $R^{\textnormal{N}} / \goth{c}$ is also $F$-pure and thus seminormal and so it is isomorphic to a direct sum of completions of coordinate rings of coordinate axes in various $\mathbf{A}_k^{n}$.  Set $C$ to be the pull-back of the diagram
\[
\{ R^{\textnormal{N}} \xrightarrow{\alpha} R^{\textnormal{N}}/\goth{c} \xleftarrow{\beta} (R/\goth{c})^{\textnormal{SN}} \}.
\]
In other words, $C$ is the ring $\{ (a, b) \in R^{\tn{N}} \oplus (R/\goth{c})^{\tn{SN}} | \alpha(a) = \beta(b) \}.$
Notice that $R \subseteq C$
is subintegral by construction, see \cite{Ferrand2003,SchwedeGluing}.  Therefore, $R \cong C$ since $R$ is seminormal.  This implies $(R/\goth{c})^{\textnormal{SN}} \cong R / \goth{c}$ and so $\Spec R / \goth{c}$ is also isomorphic to coordinate axes.

Given any component $U$ of $\Spec R/\goth{c}$ and a component $W$ of $\Spec R^{\textnormal{N}}/\goth{c}$ dominating it, suppose that $m = [K(W) : K(U)]$. Provided $p$ does not divide $m$ we see that Definition \ref{def.hst}(i) holds. Part (ii) holds vacuously as the components here are normal.
\end{xmp}

Recall a small lemma about extending maps in local rings.

\begin{lem} \cite[Observation 5.1]{SchwedeFAdjunction}
\label{lem:local}
Let $R$ be a local ring and $I$ a proper ideal. Suppose there is a surjective $R$-linear map $\alpha \colon F_*^e (R/I) \to R/I$ which is the restriction of an $R$-linear map $\beta \colon F_*^e R \to R$.  Then $\beta$ is surjective.
\end{lem}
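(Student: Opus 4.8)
The plan is to work with the ideal $J := \beta(F^e_* R)$. Since $\beta$ is $R$-linear, $J$ is an $R$-submodule of $R$, hence an ideal of $R$, and $\beta$ is surjective exactly when $J = R$. Because $R$ is local with maximal ideal $\mathfrak{m}$, this is in turn equivalent to $J \not\subseteq \mathfrak{m}$. So the entire task reduces to ruling out $J \subseteq \mathfrak{m}$, and no feature of the Frobenius or of $F^e_*$ beyond $R$-linearity of $\beta$ will be used.

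First I would unpack the hypothesis that $\alpha$ is ``the restriction of'' $\beta$: it means that $\beta(F^e_* I) \subseteq I$, so that $\beta$ descends modulo $I$, and that the induced map $F^e_*(R/I) \to R/I$ is precisely $\alpha$. Writing $\pi \colon R \to R/I$ and $\bar{\pi} \colon F^e_* R \to F^e_*(R/I)$ for the natural (surjective) quotient maps, this says $\pi \circ \beta = \alpha \circ \bar{\pi}$. Chasing this identity, $\pi(J) = \pi\big(\beta(F^e_* R)\big) = \alpha\big(\bar{\pi}(F^e_* R)\big) = \alpha\big(F^e_*(R/I)\big) = R/I$, the last equality by surjectivity of $\alpha$. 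In ideal-theoretic terms, $\pi(J) = R/I$ is exactly the statement $J + I = R$.

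Now I invoke the only ring-theoretic input: in a local ring, every proper ideal lies inside $\mathfrak{m}$. The ideal $I$ is proper by assumption, so $I \subseteq \mathfrak{m}$. If $J$ were also proper we would get $J \subseteq \mathfrak{m}$, hence $J + I \subseteq \mathfrak{m} \subsetneq R$, contradicting $J + I = R$. Therefore $J = R$, i.e.\ $\beta$ is surjective.

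As for difficulty, there is essentially no obstacle: the content of the lemma is just the observation that surjectivity after passing to the quotient by a \emph{proper} ideal already forces surjectivity over a local ring, which is a one-line consequence of localness (not even Nakayama is needed). The only point deserving a moment's care is the bookkeeping in the second paragraph — that ``$\alpha$ is the restriction of $\beta$'' is precisely the compatibility $\pi\circ\beta = \alpha\circ\bar\pi$, which is what lets one compute $\pi(J)$ as the image of $\alpha$.
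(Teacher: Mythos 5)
Your proof is correct, and the paper itself gives no argument for this lemma beyond citing \cite[Observation 5.1]{SchwedeFAdjunction}; your reasoning is essentially the standard one found there (surjectivity of $\alpha$ yields $\beta(F^e_*R) + I = R$, and since $I$ lies in the maximal ideal of the local ring $R$, this forces $\beta(F^e_*R) = R$). Nothing to add.
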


Finally, we prove our main theorem.

\begin{thm}
\label{thm:main}
Suppose that $X$ is a reduced affine $F$-finite scheme having \hst.  Further suppose that $\phi \colon F^e_* \O_X \to \O_X$ is an $\O_X$-linear map.  If the unique extension $\overline{\phi} \colon F^e_* \O_{X^{\textnormal{N}}} \to \O_{X^{\textnormal{N}}}$ is surjective, then $\phi$ is also surjective.
\end{thm}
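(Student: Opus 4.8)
The plan is to reduce, via the conductor square, to a question about the normalization map restricted to the conductor subscheme, and then to iterate downward in dimension. Since surjectivity of $\phi$ is checked stalk-wise and \hst{} is a local condition, I would first assume $R$ is local and induct on $\dim R$, the base case $R$ normal being vacuous ($\phi=\overline\phi$). Writing $\goth c$ for the conductor, Lemma~\ref{lem:ExtendingMapsUnderNormalization} gives that $\phi$ is compatible with $\goth c$, hence descends to $\phi_{\goth c}\colon F^e_*(R/\goth c)\to R/\goth c$, and $\overline\phi$ descends to $\overline\phi_{\goth c}\colon F^e_*(R^{\textnormal{N}}/\goth c)\to R^{\textnormal{N}}/\goth c$, which is surjective because it is a quotient of the surjective $\overline\phi$. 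By Lemma~\ref{lem:local} (with $I=\goth c$) it suffices to prove $\phi_{\goth c}$ is surjective. I would also note that surjectivity of $\overline\phi_{\goth c}$ forces $R^{\textnormal{N}}/\goth c$, hence its subring $R/\goth c$, to be reduced, so $\goth c$ is a radical ideal and the subschemes defined by $\goth c$ already equal $B_{\red}$ and $C_{\red}$, matching Definition~\ref{def.hst}.

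Next I would invoke \hst: there are minimal primes $\goth p\subset R$ and $\goth q\subset R^{\textnormal{N}}$ over $\goth c$ with $\goth q\cap R=\goth p$ such that $\Tr\colon (R^{\textnormal{N}}/\goth q)^{\textnormal{N}}\to(R/\goth p)^{\textnormal{N}}$ is surjective and $R/\goth p$ again has \hst. The heart is to transfer surjectivity of $\overline\phi_{\goth c}$ to a map on $(R/\goth p)^{\textnormal{N}}$. Because $R^{\textnormal{N}}/\goth c$ is reduced, its normalization splits as the product of the $(R^{\textnormal{N}}/\goth q_k)^{\textnormal{N}}$ over the minimal primes $\goth q_k$ of $\goth c$; extending $\overline\phi_{\goth c}$ along that normalization via Lemma~\ref{lem:ExtendingMapsUnderNormalization}, and using that a $p^{-e}$-linear map on a finite product of rings is the product of its components, I obtain that the induced map $\overline\psi_{\goth q}\colon F^e_*(R^{\textnormal{N}}/\goth q)^{\textnormal{N}}\to(R^{\textnormal{N}}/\goth q)^{\textnormal{N}}$ is surjective — this avoids having to know that $\overline\phi_{\goth c}$ is compatible with the individual prime $\goth q$. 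I would then observe that, since $R^{\textnormal{N}}/\goth q$ is finite over $R/\goth p$, the ring $(R^{\textnormal{N}}/\goth q)^{\textnormal{N}}$ is exactly the integral closure of $R/\goth p$ in $\Frac(R^{\textnormal{N}}/\goth q)$, and that $(R/\goth p)^{\textnormal{N}}$ sits inside it as a finite extension of normal domains that is generically separable because the trace is nonzero. Transferring surjectivity across this extension using the surjective trace is precisely the normal case of the present question — treated in \cite[Theorem 6.26]{SchwedeTuckerTestIdealFiniteMaps} — and it yields that the map $\phi_{\goth p}$ induced by $\phi$ on $R/\goth p$ has surjective normalization-extension $\overline{\phi_{\goth p}}$. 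Here $\phi_{\goth p}$ makes sense because $\phi$ is compatible with $\goth p$: $\goth c$ is $\phi$-compatible and radical, and minimal primes of a $\phi$-compatible radical ideal are $\phi$-compatible (the theory of uniformly $\phi$-compatible ideals); this compatibility also makes $\phi_{\goth p}$ the restriction of $\phi_{\goth c}$.

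With this the induction closes: $R/\goth p$ has \hst{} and strictly smaller dimension, so by the inductive hypothesis $\phi_{\goth p}$ is surjective; since $\phi_{\goth p}$ is the restriction of $\phi_{\goth c}$ to the quotient $(R/\goth c)/(\goth p/\goth c)$, Lemma~\ref{lem:local} gives that $\phi_{\goth c}$ is surjective, and a last application of Lemma~\ref{lem:local} gives that $\phi$ is surjective. The step I expect to be the main obstacle is exactly the trace transfer across $(R/\goth p)^{\textnormal{N}}\subseteq(R^{\textnormal{N}}/\goth q)^{\textnormal{N}}$ together with the bookkeeping that the canonically-defined maps $\overline\phi$, $\overline\phi_{\goth c}$, $\overline\psi_{\goth q}$, $\overline{\phi_{\goth p}}$, $\phi_{\goth p}$ are the ones matching each other under extension and restriction: the naive ``compose with trace'' map is badly behaved for an arbitrary $p^{-e}$-linear map, so it is essential that all our maps arise from $\phi$ through the canonical normalization-extension and quotient constructions, which is where generic separability — guaranteed by surjectivity of the trace in the \hst{} hypothesis — does the real work. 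Verifying the compatibility facts about minimal primes of $\goth c$, and checking that the several normalizations involved fit together as claimed, are the routine but delicate points.
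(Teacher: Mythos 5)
Your proposal matches the paper's proof in all essentials: the same induction on dimension after localizing, the same descent through the conductor square via Lemma \ref{lem:ExtendingMapsUnderNormalization} and Lemma \ref{lem:local}, the same passage to the \hst{}-selected minimal primes $\goth{p}$ and $\goth{q}$, and the same use of the surjective trace $(R^{\textnormal{N}}/\goth{q})^{\textnormal{N}} \to (R/\goth{p})^{\textnormal{N}}$ to transfer surjectivity before closing the induction. The only cosmetic difference is that you invoke the packaged normal-case result \cite[Theorem 6.26]{SchwedeTuckerTestIdealFiniteMaps} for the trace-transfer step, where the paper uses the underlying commutative square $\Tr \circ \widetilde{\phi_{\goth{q}}} = \widehat{\phi_{\goth{p}}} \circ F^e_* \Tr$ from \cite[Corollary 4.2]{SchwedeTuckerTestIdealFiniteMaps} directly.
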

\begin{proof}
We proceed by induction, the case where $X$ is zero-dimensional is obvious since then $X = X^{\textnormal{N}}$. Furthermore, the statement is local so we may assume that $X = \Spec R$ where $R$ is a local $F$-finite ring of characteristic $p > 0$ and $X^{\textnormal{N}} = \Spec R^{\textnormal{N}}$. We aim to show that an $R$-linear map $\phi \colon F_*^e R \to R$ which extends to a surjective $R^{\textnormal{N}}$-linear map $\overline{\phi} \colon F_*^e R^{\textnormal{N}}  \to R^{\textnormal{N}}$ is also surjective. Notice that $\goth{c}$ is radical in $R^{\textnormal{N}}$ (and thus also in $R$) because $\overline{\phi}$ is surjective and $\goth{c}$ is $\overline{\phi}$-compatible.  Modulo $\goth{c}$, one has maps $\phi_\goth{c} \colon F_*^e (R/\goth{c}) \to R/\goth{c}$ and $\overline{\phi}_\goth{c} \colon F_*^e (R^{\textnormal{N}}/\goth{c}) \to R^{\textnormal{N}}/\goth{c}$ by Lemma \ref{lem:ExtendingMapsUnderNormalization}.

\begin{center}
\begin{tikzpicture}
\node (A) at (-1,1) {$R^{\textnormal{N}}$};
\node (B) at (-1,-1){$R$};
\node (C) at (1,0){$R^{\textnormal{N}}/\goth{c}$};
\node(D) at (1,-2){$R/\goth{c}$};
\node(FA) at (-3,1){$F_*^e R^{\textnormal{N}}$};
\node(FB) at (-3,-1){$F_*^e R$};
\node(FC) at (3,0){$F_*^e (R^{\textnormal{N}}/\goth{c})$};
\node(FD) at (3,-2){$F_*^e (R/\goth{c})$};
\path[right hook->] (B) edge (A);
\draw[->>] (A) edge (C);
\draw[->>] (B) edge (D);
\draw[right hook->] (D) edge (C);
\draw[->] (FA) edge node[above]{$ \overline{\phi}$ }(A);
\draw[->] (FB) edge node[above]{$ \phi$ }(B);
\draw[right hook->] (FB) edge (FA);
\path[->] (FC) edge node[above]{$ \overline{\phi}_\goth{c}$ }(C);
\path[->] (FD) edge node[above]{$ \phi_\goth{c}$ }(D);
\path[right hook->] (FD) edge (FC);
\end{tikzpicture}
\end{center}

We will examine the behavior of the maps $\overline{\phi}, \phi_\goth{c}, \overline{\phi}_\goth{c}$. When $\overline{\phi}$ is surjective then so too is $\overline{\phi}_\goth{c}$ and one can ask whether the surjectivity of $\overline{\phi}_\goth{c}$ implies surjectivity of $\phi_\goth{c}$.  This is equivalent to the surjectivity of $\phi$ by Lemma \ref{lem:local}.

Since $R$ has \hst, there are components of $\Spec R/\goth{c}$ and $\Spec R^{\textnormal{N}}/\goth{c}$ which have a surjective trace map between their normalizations. Specifically, let $\goth{c} \subset \goth{p} \subset R$ and $\goth{c} \subset \goth{q} \subset R^{\textnormal{N}}$ be such that $\Spec R^{\textnormal{N}}/\goth{q}$ and $\Spec R/\goth{p}$ are the components in question.  Furthermore, note that $R/\goth{p}$ is local and $R^{\textnormal{N}}/\goth{q}$ is semilocal.  We consider the normalization of these rings and, by our \hst{} hypothesis, we know that the trace map $\tn{Tr} \colon (R^{\textnormal{N}}/\goth{q})^{\textnormal{N}} \to (R/\goth{p})^{\textnormal{N}}$ is surjective.

There are induced $R$-linear maps $$\phi_{\goth{p}} \colon F_*^e (R/\goth{p}) \to R/\goth{p} \text{ and } \overline{\phi}_\goth{q} \colon F_*^e (R^{\textnormal{N}}/\goth{q}) \to R^{\textnormal{N}}/\goth{q}$$ since minimal primes of a ring are always compatible.  Furthermore because $\overline{\phi}_\goth{c}$ is surjective so too is $\overline{\phi}_{\goth{q}}$.  So we have the following diagram.

\begin{center}
\begin{tikzpicture}
\node (A) at (-1,1) {$R^{\textnormal{N}}/\goth{c}$};
\node (B) at (-1,-1){$R/\goth{c}$};
\node (C) at (1,0){$R^{\textnormal{N}}/\goth{q}$};
\node(D) at (1,-2){$R/\goth{p}$};
\node(FA) at (-3,1){$F_*^e (R^{\textnormal{N}}/\goth{c})$};
\node(FB) at (-3,-1){$F_*^e (R/\goth{c})$};
\node(FC) at (3,0){$F_*^e (R^{\textnormal{N}}/\goth{q})$};
\node(FD) at (3,-2){$F_*^e (R/\goth{p})$};
\path[right hook->] (B) edge (A);
\draw[->>] (A) edge (C);
\draw[->>] (B) edge (D);
\draw[right hook->] (D) edge (C);
\draw[->>] (FA) edge node[above]{$ \overline{\phi}_\goth{c}$ }(A);
\draw[->] (FB) edge node[above]{$ \phi_\goth{c}$ }(B);
\draw[right hook->] (FB) edge (FA);
\path[->>] (FC) edge node[above]{$ \overline{\phi}_\goth{q}$ }(C);
\path[->] (FD) edge node[above]{$ \phi_\goth{p}$ }(D);
\path[right hook->] (FD) edge (FC);
\end{tikzpicture}
\end{center}

Since $R/\goth{c}$ is local, by Lemma \ref{lem:local}, $\phi_\goth{c}$ is surjective if and only if $\phi_{\goth{p}}$ is.  By Lemma \ref{lem:ExtendingMapsUnderNormalization}, $\phi_\goth{p}$ extends to a map $\widehat{\phi_{\goth{p}}} \colon F_*^e (R/\goth{p})^{\textnormal{N}} \to (R/\goth{p})^{\textnormal{N}}$ and $\overline{\phi}_\goth{q}$ extends to a map $$\widetilde{\phi_{\goth{q}}} \colon F_*^e (R^{\textnormal{N}}/\goth{q})^{\textnormal{N}} \to (R^{\textnormal{N}}/\goth{q})^{\textnormal{N}}.$$  For clarity, the diagram for $\widehat{\phi_{\goth{p}}}$ is below.

\begin{center}
\begin{tikzpicture}
\node (A) at (-2,2){$F_*^e (R/\goth{p})^{\textnormal{N}}$};
\node(B) at (-2,0){$F_*^e R/\goth{p} $};
\node (C) at (2,2) {$(R/\goth{p})^{\textnormal{N}}$};
\node (D) at (2,0){$R/\goth{p}$};
\path[right hook->] (B) edge (A);
\path[->] (A) edge node[above]{$ \widehat{\phi_\goth{p}}$ }(C);
\path[->] (B) edge node[above]{$ \phi_\goth{p}$ }(D);
\path[right hook->] (D) edge (C);
\end{tikzpicture}
\end{center}

Note that $\widetilde{\phi_{\goth{q}}}$ is surjective as $\overline{\phi}_{\goth{q}}$ is. By construction, $R/\goth{p}$ has \hst, and since it is lower dimensional, by our inductive hypothesis it is sufficient to show that $\widehat{\phi_{\goth{p}}}$ is surjective.

\begin{samepage}
Now we are in a situation to use the surjective trace map $\tn{Tr} \colon (R^{\textnormal{N}}/\goth{q})^{\textnormal{N}} \to (R/\goth{p})^{\textnormal{N}}$.
The following commutative diagram shows that $\widehat{\phi_\goth{p}}$ is surjective as the top square is commutative by \cite[Corollary 4.2]{SchwedeTuckerTestIdealFiniteMaps}. See also \cite[Proposition 4.1, Theorem 5.6]{SchwedeTuckerTestIdealFiniteMaps}. This completes the proof.
\begin{center}
\begin{tikzpicture}
\node (FB) at (-1.5,0){$F_*^e (R^{\textnormal{N}}/\goth{q})^{\textnormal{N}}$};
\node (FC) at (-1.5,1.5){$F_*^e (R/\goth{p})^{\textnormal{N}}$};

\node (B) at (1.5,0){$(R^{\textnormal{N}}/\goth{q})^{\textnormal{N}}$};
\node (C) at (1.5,1.5){$(R/\goth{p})^{\textnormal{N}}$};

\draw[->>] (B) edge node[right]{$\tn{Tr}$}(C);

\draw[->>] (FB) edge node[left]{$F_*^e\tn{Tr}$}(FC);


\draw[->>] (FB) edge node[below]{$\widetilde{\phi_\goth{q}}$ }(B);
\draw[->] (FC) edge node[above]{$ \widehat{\phi_\goth{p}}$ }(C);
\end{tikzpicture}
\end{center}
\end{samepage}
\end{proof}

\section{Applications}

\subsection{Semi-log canonical and dense $F$-pure type}

We use the map-divisor correspondence described in Theorem \ref{thmDivisorMapCorNonNormal} to study schemes with \hst{} and semi-log canonical singularities. While $F$-pure schemes are known to have log canonical singularities, there is only a conjectural converse, see Conjecture \ref{conj.FpureVsLogCanonical} in the introduction. We pause to prove a few needed lemmas.

\begin{lem}
\label{lem:red}
Any reduced characteristic $0$ scheme $X$ of finite type over an algebraically closed field of characteristic zero has \hst{} after reduction to characteristic $p \gg 0$.
\end{lem}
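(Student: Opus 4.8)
The plan is to reduce the statement to the defining conditions of \hst{} one stratum at a time, using the fact that both ``surjectivity of a trace map'' and ``the conductor being radical'' are conditions that survive reduction mod $p \gg 0$ for a scheme of finite type over a field of characteristic zero. First I would fix a finitely generated $\bZ$-subalgebra $A \subseteq k$ over which $X$, its normalization $X^{\textnormal{N}}$, the normalization map $\eta$, and the conductor subschemes $C \subseteq X$ and $B \subseteq X^{\textnormal{N}}$ are all defined, so that after inverting finitely many elements of $A$ these data spread out to a family over $\Spec A$ whose formation commutes with base change; standard spreading-out (cf.\ \cite{EGAIV}) guarantees this, and also guarantees that for $p \gg 0$ the fibers $X_p$, $X_p^{\textnormal{N}}$ are reduced, $X_p^{\textnormal{N}} \to X_p$ is the normalization, and the fiber of the conductor subscheme is the conductor of $X_p$.

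Next I would induct on $\dim X$; the zero-dimensional case is trivial since then $X = X^{\textnormal{N}}$ and there is nothing to check. For the inductive step, pick an irreducible component $C_i$ of $C_{\red}$ dominated by a component $B_i$ of $B_{\red}$ (such a pair exists because $\eta$ is finite and surjective). The generic extension of function fields $K(B_i) \supseteq K(C_i)$ is a finite separable extension since we are in characteristic zero, so the trace map $\Tr \colon \O_{B_i^{\textnormal{N}}} \to \O_{C_i^{\textnormal{N}}}$ is surjective: in characteristic zero a finite surjective map of normal varieties with separable (indeed, any) generic field extension has surjective trace because one may divide by the degree, or more carefully, the trace of a local generator splits off a unit multiple of $1$. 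After spreading out $C_i$, $B_i$, their normalizations and the trace map over (a possibly smaller) $\Spec A$, surjectivity of $\Tr$ is an open condition on $\Spec A$ containing the generic point, hence holds on all fibers with $p \gg 0$; this gives condition (i) of Definition \ref{def.hst} for $X_p$. For condition (ii), note $C_i$ is a reduced scheme of finite type over the same field and $\dim C_i < \dim X$, so by the inductive hypothesis $C_i$ has \hst{} after reduction to characteristic $p \gg 0$; since the whole recursion tree of strata occurring in the definition of \hst{} for $X$ is finite (each step drops dimension, as observed in the remark after Definition \ref{def.hst}), we may choose a single $p_0$ that works simultaneously for $X$ and for all strata appearing in its recursion, and then every $p \geq p_0$ works.

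The main obstacle I anticipate is purely bookkeeping: making precise that the \emph{finite} collection of trace-surjectivity and reducedness conditions arising along the entire recursive definition can all be guaranteed by a single bound $p \gg 0$, uniformly over the finitely many relevant strata $C_i, C_{ij}, \dots$ One has to be a little careful that each time the recursion descends to a stratum $C_i$, that stratum is again reduced and of finite type over the base field (which it is, being a closed subscheme cut out by the conductor), so the inductive hypothesis genuinely applies; and one must track that ``spreading out'' of $X$ automatically spreads out all of these derived strata compatibly, which follows from the compatibility of normalization and conductor formation with the generic-fiber base change. No single step is deep; the content is entirely in assembling the finite inductive tree and invoking the standard spreading-out dictionary once.
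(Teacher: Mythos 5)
Your proposal is correct and follows essentially the same route as the paper: spread out over a finitely generated $\bZ$-algebra, observe that the trace map between the normalizations of the conductor components is surjective for $p \gg 0$, and use the finiteness of the recursion tree to pick a single bound on $p$ valid for all strata. The only cosmetic difference is that the paper obtains trace surjectivity by requiring $p$ to exceed each of the finitely many degrees $n$ (so that $\Tr(1) = n$ is a unit), whereas you phrase it as openness of surjectivity starting from the characteristic-zero fiber; these are the same underlying fact.
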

\begin{proof}
We refer the reader to \cite{HochsterHunekeTightClosureInEqualCharactersticZero} for a detailed description of the reduction to characteristic $p \gg 0$ process.
We also acknowledge the following abuse of notation, by $p \gg 0$ we technically are referring to an open and Zariski dense set of maximal ideals in $A \supseteq \bZ$, a finitely generated $\bZ$-algebra used in the reduction to characteristic $p \gg 0$ process.  Again, see the aforementioned reference for more details.   See \cite[Theorem 2.3.6]{HochsterHunekeTightClosureInEqualCharactersticZero} for the relevance of the algebraically closed base-field assumption.

Suppose we are given components of the conductor subschemes $B_i \subseteq B_{\red} \subset X^{\textnormal{N}}$ and $C_i \subseteq C_{\red} \subset X$ with an degree $n$ finite morphism $B_i \to C_i$.  Performing reduction to characteristic $p \gg 0$ (in particular $p > n$), the trace map $\O_{B_i}^{\textnormal{N}} \to \O_{C_i}^{\textnormal{N}}$ is clearly surjective.  Continuing recursively, we can require that $p > n_j$ for $n_j$'s determined by a finite set of varieties.  This is certainly enough to guarantee \hst.  In fact, we can guarantee the stronger variant of \hst{} found in Remark \ref{rem.StrongerHST}.
\end{proof}

\begin{lem}
\label{lem.DivOfExtensionsPlusConductor}
Suppose that $X = \Spec R$ is an S2, G1 seminormal scheme with normalization $\eta \colon X^{\textnormal{N}} \to X$.  Further suppose that $\phi \colon F^e_* \O_X \to \O_X$ corresponds to a divisor $\Delta_{\phi}$.  Denote by $\overline{\phi} \colon F^e_* \O_{X^{\textnormal{N}}} \to \O_{X^{\textnormal{N}}}$ the extension of $\overline{\phi}$ to $X^{\textnormal{N}}$. The $\QQ$-divisor on $X^{\textnormal{N}}$ corresponding to $\overline{\phi}$, denoted $\Delta_{\overline{\phi}}$, satisfies
\[
\Delta_{\overline{\phi}} = \eta^* \Delta_{\phi} + B
\]
where $B$ is the divisor corresponding to the conductor on $X^{\textnormal{N}}$,  \ie $\goth{c} = \O_{X^{\textnormal{N}}}(-B)$ (the conductor is pure codimension 1 because $X$ is S2).
\end{lem}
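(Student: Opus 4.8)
The plan is to prove the asserted equality of $\QQ$-divisors on $X^{\textnormal{N}}$ one codimension-one point at a time. Since $X^{\textnormal{N}}$ is normal, both $\Delta_{\overline{\phi}}$ and $\eta^{*}\Delta_{\phi}+B$ are determined by their coefficients at the codimension-one points of $X^{\textnormal{N}}$, so fix such a point $Q$. If $\goth{c}\not\subseteq Q$ then $\eta$ is an isomorphism in a neighbourhood of $Q$, so $\overline{\phi}=\phi$ there and $B$ has coefficient zero at $Q$; the equality is then immediate. Hence assume $\goth{c}\subseteq Q$ and localize: $A:=\O_{X,\eta(Q)}$ is a one-dimensional local ring, and it is Gorenstein because $X$ is G1; its normalization is the discrete valuation ring $V:=\O_{X^{\textnormal{N}},Q}$, and the localized conductor is $\goth{c}_{A}=\pi^{n}V$ where $n=\ord_{Q}B\geq 1$.

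First I would record the adjunction formula for the conductor in codimension one. Fix an embedding $\omega_{A}=\tfrac{1}{u}A\subseteq K(A)$, which is the chosen $K_{X}$ near $\eta(Q)$, and embed $\omega_{V}\subseteq K(V)=K(A)$ \emph{compatibly} as $\omega_{V}=\sHom_{A}(V,\omega_{A})$ via Grothendieck duality for the finite morphism $\Spec V\to\Spec A$ (legitimate since both rings are one-dimensional, hence Cohen--Macaulay). Then $\omega_{V}=\tfrac{1}{u}\sHom_{A}(V,A)=\tfrac{1}{u}\goth{c}_{A}$, and since $V$ is regular we have $\omega_{V}=V$, forcing $\ord_{V}(u)=n$. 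In divisor language this says exactly $\eta^{*}K_{X}=K_{X^{\textnormal{N}}}+B$ near $Q$, with $K_{X^{\textnormal{N}}}$ the divisor cut out by the chosen embedding of $\omega_{V}$; in particular $K_{X^{\textnormal{N}}}=0$ at $Q$.

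Next I would transport the map--divisor correspondence through the extension. Near $\eta(Q)$ we have $\sHom_{\O_{X}}(F^{e}_{*}\O_{X},\O_{X})\cong F^{e}_{*}\O_{X}((1-p^{e})K_{X})$, under which $\phi$ corresponds to a section $s\in\O_{A}((1-p^{e})K_{A})=u^{p^{e}-1}A\subseteq K(A)$; unwinding the definition of $\Delta_{\phi}$ gives $D_{\phi}=\divisor_{A}(s)+(1-p^{e})K_{A}$ near $\eta(Q)$. Likewise $\overline{\phi}$ corresponds to a section $\bar{s}\in\O_{V}((1-p^{e})K_{V})=V$, so $D_{\overline{\phi}}=\divisor_{V}(\bar{s})$ near $Q$ because $K_{V}=0$. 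Now restrict everything to the generic point lying over $Q$: there $\O_{X}$ and $\O_{X^{\textnormal{N}}}$ coincide (the total ring of fractions), the two duality isomorphisms coincide for the compatible choices made above, and $\overline{\phi}$ restricts to $\phi$ since it extends it; hence $\bar{s}$ equals the image of $s$ at that generic point, which gives $\divisor_{V}(\bar{s})=\eta^{*}\divisor_{A}(s)$ at $Q$. Combining this with the second paragraph,
\[
D_{\overline{\phi}}=\eta^{*}\divisor_{A}(s)=\eta^{*}\big(D_{\phi}-(1-p^{e})K_{X}\big)=\eta^{*}D_{\phi}-(1-p^{e})\big(K_{X^{\textnormal{N}}}+B\big)=\eta^{*}D_{\phi}+(p^{e}-1)B
\]
at $Q$, using $K_{X^{\textnormal{N}}}=0$ at $Q$. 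Dividing by $p^{e}-1$ yields $\Delta_{\overline{\phi}}=\eta^{*}\Delta_{\phi}+B$ at $Q$, and since $Q$ was arbitrary this proves the lemma.

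I expect the main obstacle to be the second paragraph: extracting the conductor with the exact multiplicity $\ord_{Q}B$ out of the identifications $\omega_{X^{\textnormal{N}}}\cong\sHom_{\O_{X}}(\eta_{*}\O_{X^{\textnormal{N}}},\omega_{X})$ and $\goth{c}=\sHom_{\O_{X}}(\eta_{*}\O_{X^{\textnormal{N}}},\O_{X})$, and simultaneously pinning down the embeddings of all the canonical modules into their total rings of fractions in a mutually compatible way, so that the displayed identities hold for the actual divisors and not merely up to linear equivalence.
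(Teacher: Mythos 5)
Your overall strategy is essentially the one the paper uses --- reduce to a codimension-one point, use duality for the finite map $A=\O_{X,\eta(Q)}\subseteq V$ to bring in the conductor (your $\omega_V=\sHom_A(V,\omega_A)=\tfrac1u\goth{c}$ is the paper's free generator $\Phi\in\Hom_R(R^{\textnormal{N}},R)$ with $\Phi(1)=c$ generating $\goth{c}$), and then compare $\phi$ with $\overline{\phi}$ at the generic point. The packaging differs (you carry a general section $s$ through, while the paper first reduces to $\Delta_\phi=0$ by factoring $\phi=\psi(g\cdot\blank)$ through a generator $\psi$), but the substance is the same. However, there is one step where the real content lives and where your write-up only asserts what must be proved: the claim that ``the two duality isomorphisms coincide for the compatible choices made above,'' i.e.\ that $\bar s$ equals the image of $s$ at the generic point. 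Fixing $\omega_V=\sHom_A(V,\omega_A)$ pins down the embedding of $\omega_V$ into $K$, but it does not pin down the isomorphisms $\Hom_A(F^e_*A,\omega_A)\cong F^e_*\omega_A$ and $\Hom_V(F^e_*V,\omega_V)\cong F^e_*\omega_V$ relative to one another: each is canonical only up to a unit of $F^e_*A$ (resp.\ $F^e_*V$), so a priori your two identifications of $\Hom_K(F^e_*K,K)$ with $F^e_*K$ differ by multiplication by some $w\in K^\times$, giving $\bar s=ws$ and $D_{\overline{\phi}}=\eta^*D_\phi+(p^e-1)B+\divisor_V(w)$. You need $\ord_V(w)=0$, and that is exactly the nontrivial compatibility of Frobenius duality with the normalization map --- the relation $\Phi\circ\psi=\phi\circ(F^e_*\Phi)$ up to a unit of $F^e_*R^{\textnormal{N}}$, which the paper invokes (it is \cite[Proposition 4.1, Corollary 4.2]{SchwedeTuckerTestIdealFiniteMaps}, proved by observing that both sides generate the cyclic $F^e_*R^{\textnormal{N}}$-module $\Hom_R(F^e_*R^{\textnormal{N}},R)$). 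Without this, the coefficient comparison at $Q$ is not established. Note this is the opposite of where you predicted the difficulty: your second paragraph is the routine part.

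A second, smaller point: ``since $V$ is regular we have $\omega_V=V$, forcing $\ord_V(u)=n$'' is a non sequitur. Regularity gives $\omega_V\cong V$ only abstractly; for the embedding you fixed, $\omega_V=\tfrac{\pi^n}{u}V$, which is abstractly free regardless of $\ord_V(u)$, and $u$ was already determined by ``the chosen $K_X$.'' This slip turns out to be harmless: what your computation actually needs is only $\eta^*K_A=K_V+B$ for the compatibly embedded $K_V$, and if you carry the (possibly nonzero) $K_V$ through both occurrences --- in $D_{\overline{\phi}}=\divisor_V(\bar s)+(1-p^e)K_V$ and in $\eta^*K_A=K_V+B$ --- the $(1-p^e)K_V$ terms cancel and the final identity $\Delta_{\overline{\phi}}=\eta^*\Delta_\phi+B$ is unaffected. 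Alternatively, you may legitimately \emph{choose} $u$ to be a generator of $\goth{c}$ as an $R^{\textnormal{N}}$-module (possible after localizing, since $A$ is Gorenstein so $\tfrac1c A$ is a valid embedding of $\omega_A$), which makes $\omega_V=V$ on the nose; but then say so as a choice rather than a deduction.
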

\begin{proof}
First we explain how to pull back divisors via $\eta$.  This pull-back process is completely determined in codimension 1 (which is reasonable, since the divisors are determined in codimension 1).  Thus, suppose that we are given a $\sF \tensor {\lambda} = \Delta \in \ZPAlDiv(X) = \AlDiv(X) \tensor_{\bZ} \bZ_{(p)}$.  By construction, since $\sF$ is \AC, it is easy to pull-back (work outside a set of codimension 2, or see \cite[16.3.5]{KollarFlipsAndAbundance}).  We define $\eta^* \Delta := \eta^* \sF \tensor \lambda$.  It is straightforward to verify that this is well defined.


The statement of the lemma can also be checked in codimension 1 and so we assume that $R$ is 1-dimensional and local.  Write $\Delta_{\phi} = (g) \tensor {1 \over p^e - 1}\in \ZPAlDiv(X)$ for some $g \in K(X)$ (we can do this because $\Delta$ is $\bZ_{(p)}$-\AC).  The pullback of $\Delta_{\phi}$ is then just ${1 \over p^e - 1} \divisor_{X^{\textnormal{N}}}(g)$.  We claim it is sufficient to check the statement when $g = 1$ (so that $\Delta_{\phi} = 0$).  To see this claim, choose $\psi \colon F^e_* \O_X \to \O_X$ such that $\Delta_{\psi}$ is zero (which we can do since we have reduced to the case where $R$ is Gorenstein).  This $\psi$ generates $\Hom_{\O_X}(F_*^e\O_X,\O_X)$ as an $F_*^e\O_X$-module and so $\psi(g \cdot \blank) = \phi(\blank)$ for some $g \in F^e_* R$.  It follows that $\overline{\psi}(g \cdot \blank) = \overline{\phi}(\blank)$.  Thus $\Delta_{\overline{\psi}} + {1 \over p^e -1}\divisor_{X^{\textnormal{N}}}(g) = \Delta_{\overline{\phi}}$.  Therefore, if $\Delta_{\overline{\psi}} = \eta^* \Delta_{\psi} + B$, we can add ${1 \over p^e - 1}\divisor_{X^{\textnormal{N}}}(g) = \eta^* \Delta_{\phi}$ to both sides of the equation, which proves the claim.

In this context, $R$ is Gorenstein and local and $R^{\textnormal{N}}$ is regular and semi-local.  It follows that $\Hom_R(R^{\textnormal{N}}, R)$ is a free $R^{\textnormal{N}}$-module.  Fix $\Phi : R^{\textnormal{N}} \to R$ to be a generator (and assume it sends $1$ to some $c \in \goth{c}$ which generates $\goth{c}$ as an $R^{\textnormal{N}}$-module).  Also notice that the assumptions imply that $\phi$ generates $\Hom_R(F^e_* R, R)$ as an $F^e_* R$-module.   Furthermore, for any $F^e_*(R^{\textnormal{N}})$-module generator $\psi \in \Hom_{R^{\textnormal{N}}}(F^e_* (R^{\textnormal{N}}), R^{\textnormal{N}})$, we know that we have $\Phi \circ \psi = \phi \circ (F^e_* \Phi)$ up to multiplication by a unit (which we can then absorb into $\psi$ obtaining a true equality).

At the level of the field of fractions, $\phi$ and $\overline{\phi}$ are the same map and $\Phi$ is multiplication by $c$.  Thus, since $\Phi \circ \psi = \phi \circ (F^e_* \Phi)$, we have $c \cdot \psi = \phi \cdot c$ again at the level of the field of fractions.  Therefore $\psi(c^{p^e - 1} \cdot \blank) = \overline{\phi}(\blank)$.  This implies that $\Delta_{\overline{\phi}}$ is the divisor of $\goth{c}$ as desired. \end{proof}

\begin{cor}
 \label{cor.hstNormalizationQGorenstein}
Suppose that $X = \Spec R$ is an affine $F$-finite scheme satisfying \hst{} and which is also S2, G1 and $\bZ_{(p)}$-Gorenstein.  Set $X^{\textnormal{N}}$ to be the normalization of $X$ and set $B$ to be the divisor on $X^{\textnormal{N}}$ corresponding to the conductor ideal, \ie $\goth{c} = \O_{X^{\textnormal{N}}}(-B)$.  Then $X$ is $F$-pure if and only if $(X^{\textnormal{N}}, B)$ is $F$-pure.
\end{cor}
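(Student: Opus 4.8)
The plan is to push everything through the map--divisor dictionary of Theorem~\ref{thmDivisorMapCorNonNormal} and then quote Theorem~\ref{thm:main}. Since the assertion is local, exactly as in the proof of Theorem~\ref{thm:main}, I may assume $R$ is local. As $X$ is S2, G1 and $\bZ_{(p)}$-Gorenstein, $K_X$ is $\bZ_{(p)}$-Cartier, so Theorem~\ref{thmDivisorMapCorNonNormal} provides, for some $e>0$, a map $\phi\colon F^e_*\O_X\to\O_X$ that is nonzero on every component of $X$ and whose associated divisor $\Delta_\phi$ is $0$; moreover surjectivity of $\phi$ does not depend on the choices made in the correspondence, so by definition $X$ is $F$-pure if and only if $\phi$ is surjective. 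Using Lemma~\ref{lem:ExtendingMapsUnderNormalization}, extend $\phi$ uniquely to $\overline{\phi}\colon F^e_*\O_{X^{\textnormal{N}}}\to\O_{X^{\textnormal{N}}}$.

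Next I would identify $\overline{\phi}$ on the normalization. By Lemma~\ref{lem.DivOfExtensionsPlusConductor}, the divisor attached to $\overline{\phi}$ is $\Delta_{\overline{\phi}}=\eta^*\Delta_\phi+B=B$; in particular $(X^{\textnormal{N}},B)$ is a log $\bZ_{(p)}$-Gorenstein pair and $\overline{\phi}$ is the map attached to $B$ on the normal scheme $X^{\textnormal{N}}$ (up to the equivalences of Theorem~\ref{thmDivisorMapCorNonNormal}, under which surjectivity is preserved). Hence $(X^{\textnormal{N}},B)$ is $F$-pure if and only if $\overline{\phi}$ is surjective, and the corollary reduces to the equivalence ``$\phi$ is surjective if and only if $\overline{\phi}$ is surjective''. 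One direction is immediate: $\overline{\phi}$ restricts to $\phi$ on $F^e_*R$, so if $\phi(x)=1$ for some $x$ then $\overline{\phi}(x)=1$, and $\overline{\phi}$ is then surjective by $F^e_*\O_{X^{\textnormal{N}}}$-linearity (this is \cite[Exercise 1.2.4(E)]{BrionKumarFrobeniusSplitting}). The converse, that $\overline{\phi}$ surjective implies $\phi$ surjective, is precisely the content of Theorem~\ref{thm:main}, which applies because $X$ has \hst.

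The only wrinkle is that Lemma~\ref{lem.DivOfExtensionsPlusConductor} is stated for seminormal schemes, so I would first reduce to that case. If $X$ is $F$-pure then $R$ is $F$-split, hence reduced and seminormal (a standard property of $F$-split rings), and nothing further is needed. If instead $(X^{\textnormal{N}},B)$ is $F$-pure, then the surjective map attached to $B$ is compatible with $\goth{c}=\O_{X^{\textnormal{N}}}(-B)$ and $B$ occurs with coefficient one, which forces $\goth{c}$ to be radical in $R^{\textnormal{N}}$, hence also in $R$; seminormality of $R$ then follows from the pullback description of seminormal rings used in Example~\ref{xmp:basecase}, by induction on $\dim R$. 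I expect this seminormality reduction, together with the care required to apply Theorem~\ref{thmDivisorMapCorNonNormal} in the presence of torsion in $\AlDiv(X)$, to be the only real obstacles here; the genuine mathematical work is carried by Theorem~\ref{thm:main}, which is already available.
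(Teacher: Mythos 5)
Your main line is exactly the paper's: reduce to the local case, use Theorem~\ref{thmDivisorMapCorNonNormal} and the $\bZ_{(p)}$-Gorenstein hypothesis to realize the zero divisor on $X$ as a map $\phi \colon F^e_* \O_X \to \O_X$, identify $\Delta_{\overline{\phi}} = \eta^*\Delta_\phi + B = B$ via Lemma~\ref{lem.DivOfExtensionsPlusConductor}, and then obtain the hard implication from Theorem~\ref{thm:main} and the easy one from the fact that $\overline{\phi}$ restricts to $\phi$. You are also right to flag that Lemma~\ref{lem.DivOfExtensionsPlusConductor} is stated only for seminormal $X$ while the corollary does not assume seminormality; the paper's own proof passes over this silently.

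The difficulty is your repair of that mismatch in the direction ``$(X^{\textnormal{N}},B)$ $F$-pure $\Rightarrow$ $X$ $F$-pure.'' The forward direction ($F$-pure implies seminormal) is fine, but in the backward direction you deduce only that $\goth{c}$ is radical in $R^{\textnormal{N}}$ and then assert seminormality ``from the pullback description used in Example~\ref{xmp:basecase}, by induction.'' That argument runs the wrong way: in Example~\ref{xmp:basecase} seminormality of $R$ is the \emph{input} that forces $R$ to coincide with the pullback ring $C$; it is not derived from the conductor square. Moreover, radicality of the conductor genuinely does not imply seminormality. For instance, let $D = k[u]\times k[v]$ and $C = \{(f(u), f(v^2)) : f \in k[s^2,s^3]\} \subseteq D$; then $D$ is finite over $C$, the conductor of $C$ in $D$ is zero, and $t = (u,v^2)$ satisfies $t^2, t^3 \in C$ but $t \notin C$. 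Gluing $\overline{A} = k[x,u]\times k[y,v]$ to $C$ along the radical ideal $(x)\times(y)$ produces a reduced ring $A$ whose conductor in its normalization $\overline{A}$ is the radical ideal $(x)\times(y)$, yet $A$ is not seminormal (by Swan's criterion applied to the lift of $t$). So your patch leaves the backward implication unproved. The cleaner resolution is to observe that the seminormality hypothesis is never used in the proof of Lemma~\ref{lem.DivOfExtensionsPlusConductor}: the statement is checked in codimension one, where one only needs $R$ one-dimensional Gorenstein local (from G1) and $R^{\textnormal{N}}$ regular semilocal, so that $\Hom_R(R^{\textnormal{N}},R) \cong \goth{c}$ is generated by multiplication by a single element $c$ and the identity $\overline{\phi}(\blank) = \psi(c^{p^e-1}\cdot \blank)$ yields $\Delta_{\overline{\phi}} = \eta^*\Delta_\phi + \divisor(c)$ with no seminormality assumption. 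Either make that observation explicit, or your proof only covers the forward implication.
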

\begin{proof}
Since $X$ is $\bZ_{(p)}$-Gorenstein, by working sufficiently locally, we may assume that the zero divisor on $X$ corresponds to a map $\phi : F^e_* \O_X \to \O_X$.  Therefore Lemma \ref{lem.DivOfExtensionsPlusConductor} implies that $\overline{\phi} : F^e_* \O_{X^{\textnormal{N}}} \to \O_{X^{\textnormal{N}}}$ corresponds to the divisor $B$.  An application of Theorem \ref{thm:main} completes the proof.
\end{proof}

\begin{xmp}
 If $X = \Spec R$ is a curve singularity with a node at $x \in X$, then $X^{\textnormal{N}}$ is smooth and the conductor ideal is simply the ideal of $x$.  In particular, if one takes a $F^e_*R$-module generator $\phi \in \Hom_R(F^e_* R, R)$ and extends it to a map $\overline{\phi} \in \Hom_{R^{\textnormal{N}}}(F^e_* R^{\textnormal{N}}, R^{\textnormal{N}})$, the divisor $\Delta_{\phi}$ is zero while the divisor $\Delta_{\overline{\phi}}$ is the divisor of the origin with coefficient $1$.

However, now suppose that a curve $X = \Spec R$ has a cusp singularity at $x \in X$.  Note $X^{\textnormal{N}}$ is still smooth and fix $y$ to be the preimage of $x$.  The conductor ideal is the \emph{square} of the ideal of the point of $y \in X^{\textnormal{N}}$.  In particular, if one takes a $F^e_*R$-module generator $\phi \in \Hom_R(F^e_* R, R)$ and extends it to a map $\overline{\phi} \in \Hom_{R^{\textnormal{N}}}(F^e_* R^{\textnormal{N}}, R^{\textnormal{N}})$, the divisor $\Delta_{\phi}$ is zero while the divisor $\Delta_{\overline{\phi}}$ is the divisor of the origin with coefficient $2$.
\end{xmp}




As before, consider a local non-normal S2 affine reduced scheme $X = \Spec R$ and the natural map $\eta \colon X^{\textnormal{N}} \to X$ on the normalization. Write $X^{\textnormal{N}} = \Spec R^{\textnormal{N}}$ and let $C$ be the subscheme associated to the conductor in $X$  and $B$ the divisor associated to the conductor in $X^{\textnormal{N}}$,  \ie $\goth{c} = \O_{X^{\textnormal{N}}}(-B)$. 


\begin{cor}
\label{cor.FpureVsSLC}
Assume Conjecture \ref{conj.FpureVsLogCanonical} holds.  Suppose that $X$ is a seminormal, S2 and G1 pair of finite type over an algebraically closed field $k$ of characteristic zero and $\Delta \in \QAlDiv(X)$ is such that $K_X + \Delta$ is $\QQ$-Cartier.  The pair $(X, \Delta)$ has semi-log canonical singularities if and only if it has dense $F$-pure type.
\end{cor}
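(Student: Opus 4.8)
The plan is to deduce the statement from the definition of semi-log canonical together with Conjecture~\ref{conj.FpureVsLogCanonical} applied on the normalization, the whole point being to compare surjectivity of the map $\phi$ attached to $\Delta$ with that of its extension $\overline{\phi}$ after reduction to characteristic $p \gg 0$. The structure is a chain of three equivalences: $(X,\Delta)$ semi-log canonical $\iff$ $(X^{\textnormal{N}}, \eta^*\Delta + B)$ log canonical (by definition) $\iff$ $(X^{\textnormal{N}}, \eta^*\Delta + B)$ has dense $F$-pure type (by Conjecture~\ref{conj.FpureVsLogCanonical}, applied to the normal pair $(X^{\textnormal{N}}, \eta^*\Delta + B)$, which is log $\QQ$-Gorenstein since $K_{X^{\textnormal{N}}} + \eta^*\Delta + B$ is $\QQ$-linearly equivalent to $\eta^*(K_X+\Delta)$) $\iff$ $(X,\Delta)$ has dense $F$-pure type (the real content, using Theorem~\ref{thm:main}).

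First I would set up the reduction to characteristic $p$. Since $K_X + \Delta$ is $\QQ$-Cartier, working locally at each point of $X$ and choosing a model over a finitely generated $\bZ$-algebra, one obtains for all $p \gg 0$ (in particular $p$ not dividing the index and $p$ larger than the degrees appearing in the recursive \hst{} data) a map $\phi_p \colon F^e_* \O_{X_p} \to \O_{X_p}$ corresponding to the reduction $\Delta_p$ via Theorem~\ref{thmDivisorMapCorNonNormal}. For $p \gg 0$ the scheme $X_p$ is again S2, G1 and seminormal, and has \hst{} by Lemma~\ref{lem:red}; moreover formation of the normalization and of the conductor commutes with reduction mod $p \gg 0$, so $(X_p)^{\textnormal{N}} = (X^{\textnormal{N}})_p$ and the conductor divisor is $B_p$. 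By Lemma~\ref{lem.DivOfExtensionsPlusConductor}, the unique extension $\overline{\phi_p}$ of Lemma~\ref{lem:ExtendingMapsUnderNormalization} corresponds to $\eta^*\Delta_p + B_p$, the reduction of $\eta^*\Delta + B$. Hence $(X,\Delta)$ has dense $F$-pure type iff $\phi_p$ is surjective for infinitely many $p$, and $(X^{\textnormal{N}}, \eta^*\Delta + B)$ has dense $F$-pure type iff $\overline{\phi_p}$ is surjective for infinitely many $p$.

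It then remains to check that these two conditions on the $\phi_p$'s are equivalent. For the direction ($\Leftarrow$): if $\phi_p$ is surjective, then since $\overline{\phi_p}$ restricts to $\phi_p$ on $F^e_* R$ by Lemma~\ref{lem:ExtendingMapsUnderNormalization}, the image of $\overline{\phi_p}$ contains $1$ and hence $\overline{\phi_p}$ is surjective (this is the easy Brion--Kumar direction). For ($\Rightarrow$): if $\overline{\phi_p}$ is surjective, then for $p \gg 0$ so that $X_p$ has \hst, Theorem~\ref{thm:main} forces $\phi_p$ to be surjective; since "$p \gg 0$" excludes only finitely many primes, an infinite set of $p$ for which $\overline{\phi_p}$ is surjective yields an infinite set for which $\phi_p$ is surjective. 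Assembling the three equivalences gives the corollary.

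I expect the main obstacle to be the spreading-out bookkeeping rather than any new geometric input: one must verify that normalization, conductor ideal and conductor divisor $B$, the extension $\overline{\phi}$ of Lemma~\ref{lem:ExtendingMapsUnderNormalization}, and the divisor identity $\Delta_{\overline{\phi}} = \eta^*\Delta_\phi + B$ of Lemma~\ref{lem.DivOfExtensionsPlusConductor} all commute with base change to sufficiently large $p$, that the S2, G1 and seminormality hypotheses and the coprimality of the index with $p$ persist for $p \gg 0$, and — since the map--divisor correspondence of Theorem~\ref{thmDivisorMapCorNonNormal} and the \hst{} condition are local (indeed semilocal) notions — that the argument is carried out consistently at every point of $X$. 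Each ingredient is standard, and once the reduction mod $p \gg 0$ is in place the proof is exactly the chain of equivalences above.
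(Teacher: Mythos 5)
Your proposal is correct and follows essentially the same route as the paper: reduce mod $p \gg 0$ (preserving S2, G1, seminormality and obtaining \hst{} via Lemma \ref{lem:red}), identify the divisor of the extension $\overline{\phi}$ as $\eta^*\Delta_p + B_p$ via Lemma \ref{lem.DivOfExtensionsPlusConductor}, and pass between surjectivity of $\phi_p$ and $\overline{\phi_p}$ using Theorem \ref{thm:main} in one direction and the easy restriction argument in the other. The only cosmetic difference is that for the converse the paper invokes the unconditional Hara--Watanabe implication (dense $F$-pure type $\Rightarrow$ log canonical) rather than the conjectural equivalence, but since the Corollary assumes Conjecture \ref{conj.FpureVsLogCanonical} this changes nothing.
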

\begin{proof}
We now refer the reader to both \cite{HochsterHunekeTightClosureInEqualCharactersticZero} and \cite{HaraWatanabeFRegFPure} for a detailed description of the reduction to characteristic $p \gg 0$ process in this context.  As before, we also acknowledge the following abuse of notation, by $p \gg 0$ we technically are referring to an open and Zariski dense set of maximal ideals in $A \supseteq \bZ$, a finitely generated $\bZ$-algebra used in the reduction to characteristic $p \gg 0$ process.  Again, see the aforementioned references for more details.

Since the S2 property can be detected by examining the support of finitely many $\Ext$ modules, the reductions to characteristic $p \gg 0$ are also S2.  Likewise because $X$ is G1, there is a subset $Z \subseteq X$ of codimension greater than $1$, such that $X \setminus Z$ is Gorenstein.  Thus the same can be preserved after reduction to characteristic $p \gg 0$.  Finally, since $X$ is seminormal, so are its reductions to characteristic $p \gg 0$, to see this use \cite[Corollary 2.7(vii)]{GrecoTraversoSeminormal}.

Set $\eta \colon X^{\textnormal{N}} \to X$ to be the normalization and fix $C \subseteq X$ and $B \subseteq X^{\textnormal{N}}$ to be the subschemes defined by the conductor, respectively.  We can reduce these schemes and subschemes to characteristic $p \gg 0$ as well.

Assume that $(X, \Delta)$ is semi-log canonical which implies that $(X^{\textnormal{N}}, \eta^* \Delta + B)$ is log canonical.  For a Zariski-dense set of  characteristic $p \gg 0$, by assumption we have that $(X^{\textnormal{N}}_p, \eta_p^* \Delta_p + B_p)$ is $F$-pure.  By working on sufficiently small affine charts, we also assume that $K_X + \Delta \sim_{\bQ} 0$ and the same holds for $K_{X_p} + \Delta_p$ in characteristic $p \gg 0$.

In fact, we may even represent $\Delta_p \in \ZPAlDiv(X_p)$ and assume that it is $\bZ_{(p)}$-Cartier (or $\bZ_{(p)}$-linearly equivalent to zero).  It may also be helpful to the reader to notice that, under either hypothesis, $\eta^* \Delta$ has no common components with $B$ and so pathologies discussed in Remark \ref{rem.PathologiesForDivisorsOnNonnormal} can be avoided by viewing $\Delta$ as an element of $\textnormal{WDiv}(X) \tensor \bQ$, the $\bQ$-Weil divisorial sheaves\footnote{The Weil divisorial sheaves $\textnormal{WDiv}(X)$ are the $\sF \in \AlDiv(X)$ which equal $\O_X$ along the non-normal locus of $X$.  Associated $\bQ$-divisors may be treated more like $\bQ$-divisors on normal varieties (in particular, the subgroup of Weil divisorial sheaves has no torsion).}, see \cite[16.2.1]{KollarFlipsAndAbundance}.

For $p \gg 0$, we may assume that the index of $K_{X_p} + \Delta_p$ is not divisible by the characteristic $p > 0$.  Thus $\Delta_p$ induces a map $\phi \colon F^e_* \O_{X_p} \to \O_{X_p}$, which extends to $\overline{\phi} \colon F^e_* \O_{X^{\textnormal{N}}_p} \to \O_{X^{\textnormal{N}}_p}$.  The divisor associated to $\overline{\phi}$ is thus $\eta^* \Delta_{p} + B_p$ and so $\overline{\phi}$ is surjective by Lemma \ref{lem.DivOfExtensionsPlusConductor}.

Now $X_p$ has \hst{} for $p \gg 0$, and so $\phi$ is surjective for a Zariski-dense set of $p \gg 0$.  This proves the ($\Rightarrow$) direction. Conversely, suppose that $(X, \Delta)$ has dense $F$-pure type.  But again by Lemma \ref{lem.DivOfExtensionsPlusConductor} this implies that $(X^{\textnormal{N}}, \eta^* \Delta + B)$ also has dense $F$-pure type, which implies that $(X^{\textnormal{N}}, \eta^* \Delta + B)$ is log canonical by \cite{HaraWatanabeFRegFPure} and so $(X, \Delta)$ is semi-log canonical by definition.
\end{proof}

\subsection{Inversion of adjunction for schemes with \hst}


We first review the inversion of adjunction statement we are concerned with.  Fix a pair $(X, S + \Delta)$ where $X$ is a normal scheme, $S$ a reduced integral Weil divisor and $\Delta$ an effective $\QQ$-divisor, with no common components with $S$, such that $K_X + \Delta + S$ is $\bQ$-Cartier.  Set $\eta \colon S^{\textnormal{N}} \to S$ to be the normalization of $S$ and recall that there is a canonically defined divisor $\Delta_{S^{\textnormal{N}}}$ called the \emph{different of $\Delta$ on $S^{\textnormal{N}}$} which satisfies $K_{S^{\textnormal{N}}} + \Delta_{S^{\textnormal{N}}} \sim_{\bQ} \eta^*(K_X + \Delta + S)$.  In general, adjunction and inversion of adjunction is the comparison of the singularities of a pair $(X,S+\Delta)$ with the singularities of $(S^{\textnormal{N}}, \Delta_{S^{\textnormal{N}}})$. The implication ``$(X, S + \Delta)$ is log canonical $\Rightarrow$  $(S^{\textnormal{N}}, \Delta_S)$ is log canonical'' is called the \emph{adjunction direction}.  The converse implication (at least near $S$) is known as \emph{inversion of adjunction}; see \cite{KawakitaInversion}, \cite[Chapter 17]{KollarFlipsAndAbundance}.

The direct analog of the adjunction direction is known in characteristic $p > 0$ \cite[Prop. 8.2(iv)]{SchwedeFAdjunction}.  In particular, in characteristic $p > 0$, if additionally the index of $K_X + \Delta + S$ is not divisible by $p > 0$ then there exists a canonically determined divisor $\Delta_{S^{\textnormal{N}}}$, called the \emph{$F$-different}, such that $\eta^*(K_X + S + \Delta) \sim_{\bQ} K_{S^{\textnormal{N}}} + \Delta_{S^{\textnormal{N}}}$ and furthermore if $(X, S + \Delta)$ is $F$-pure, then so is $(S^{\textnormal{N}}, \Delta_{S^{\textnormal{N}}})$.  The $F$-different $\Delta_{S^{\textnormal{N}}}$ is constructed as follows:

We work locally and so may assume that $X = \Spec R$ is the spectrum of a local ring.  By hypothesis, there exists a map $\phi_{S+\Delta} \colon F^e_* \O_X \to \O_X$ corresponding to $S + \Delta$.  The ideal $\O_X(-S)$ is $\phi_{S+\Delta}$-compatible (see for example, \cite[Section 7]{SchwedeFAdjunction}), and so there is an induced map $\phi \colon F^e_* \O_S \to \O_S$.  Now, $S$ is not necessarily normal (or S2 or G1) so it is difficult interpret $\phi$ as a divisor.  However, by Lemma \ref{lem:ExtendingMapsUnderNormalization}, $\phi$ extends to a map $\overline{\phi} \colon F^e_* \O_{S^{\textnormal{N}}} \to  \O_{S^{\textnormal{N}}}$.  Finally, we associate to this map the divisor $\Delta_{S^{\textnormal{N}}} := \Delta_{\overline{\phi}}$.

\begin{remark}
It is an open question whether the different and the $F$-different coincide, see \cite[Remark 7.6]{SchwedeFAdjunction} for some discussion of this question.
\end{remark}

In \cite[Example 8.4]{SchwedeFAdjunction} an example is produced where $(X, S + \Delta)$ is not $F$-pure (\ie $\phi_{S+\Delta}$ is not surjective) but $(S^{\textnormal{N}}, \Delta_{S^{\textnormal{N}}})$ is $F$-pure (\ie $\overline{\phi}$ is surjective).  In other words, inversion of adjunction fails.  This counterexample is loaded with exactly the pathologies that are avoided by rings having \hst{} and the following corollary is easy to prove.

\begin{cor}
\label{cor.invofadj}
Suppose that $X$ is a normal scheme of characteristic $p > 0$, $\Delta$ is a $\QQ$-divisor on $X$ and $S$ is a reduced integral Weil divisor on $X$, with no common components with $\Delta$, such that $K_X + S + \Delta$ is $\QQ$-Cartier with index not divisible by $p$.  Denote by $S^{\textnormal{N}}$ the normalization of $S$ and $\eta \colon S^{\textnormal{N}} \to S \subseteq X$ the natural map. There exists a canonically determined $\QQ$-divisor $\Delta_{S^{\textnormal{N}}}$ on $S^{\textnormal{N}}$ such that $\eta^*( K_X + S + \Delta) \sim_{\bQ} K_{S^{\textnormal{N}}} + \Delta_{S^{\textnormal{N}}}$.  Furthermore, if $S$ has \hst, then $(X, S + \Delta)$ is $F$-pure near $S$ if and only if $(S^{\textnormal{N}}, \Delta_{S^{\textnormal{N}}})$ is $F$-pure.
\end{cor}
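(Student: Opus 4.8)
The plan is to localize at points of $S$, restrict $\phi_{S+\Delta}$ to $S$, and then feed the resulting map into Theorem \ref{thm:main}. For the first assertion I would simply recall the construction of $\Delta_{S^{\textnormal{N}}}$ given in the paragraph preceding the statement: working locally so that $X=\Spec R$ with $R$ local, the hypothesis that the index of $K_X+S+\Delta$ is prime to $p$ produces a map $\phi_{S+\Delta}\colon F^e_*\O_X\to\O_X$ corresponding to $S+\Delta$; the ideal $\O_X(-S)$ is $\phi_{S+\Delta}$-compatible, so there is an induced map $\phi\colon F^e_*\O_S\to\O_S$ on $\O_S=\O_X/\O_X(-S)$; by Lemma \ref{lem:ExtendingMapsUnderNormalization} this extends uniquely to $\overline{\phi}\colon F^e_*\O_{S^{\textnormal{N}}}\to\O_{S^{\textnormal{N}}}$; and one sets $\Delta_{S^{\textnormal{N}}}:=\Delta_{\overline{\phi}}$. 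The relation $\eta^*(K_X+S+\Delta)\sim_{\bQ}K_{S^{\textnormal{N}}}+\Delta_{S^{\textnormal{N}}}$ and the fact that $\Delta_{S^{\textnormal{N}}}$ does not depend on the choices made are exactly \cite[Prop. 8.2(iv)]{SchwedeFAdjunction}, so I would cite that rather than reprove it.

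For the equivalence of $F$-purity I would argue the two implications separately, after reducing to the case $X=\Spec R$ with $R$ local at a point $x\in S$ (legitimate since $F$-purity near $S$ is a condition on the local rings $\O_{X,x}$ for $x\in S$). The adjunction direction $(\Rightarrow)$ needs no tameness hypothesis and is contained in \cite[Prop. 8.2(iv)]{SchwedeFAdjunction}: if $\phi_{S+\Delta}$ is surjective, then the composite $F^e_*\O_X\xrightarrow{\phi_{S+\Delta}}\O_X\twoheadrightarrow\O_X/\O_X(-S)=\O_S$ is surjective and factors as $F^e_*\O_X\twoheadrightarrow F^e_*\O_S\xrightarrow{\phi}\O_S$, so $\phi$ is surjective; choosing $z\in F^e_*\O_S$ with $\phi(z)=1$ and observing $\overline{\phi}(z)=\phi(z)=1$ shows $\overline{\phi}$ is surjective, i.e. $(S^{\textnormal{N}},\Delta_{S^{\textnormal{N}}})$ is $F$-pure.

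For the inversion direction $(\Leftarrow)$, assume $(S^{\textnormal{N}},\Delta_{S^{\textnormal{N}}})$ is $F$-pure, i.e. $\overline{\phi}$ is surjective. The scheme $S$ is reduced, $F$-finite (being a closed subscheme of the $F$-finite scheme $X$), and has \hst{} by hypothesis, and $\overline{\phi}$ is precisely the unique extension to $S^{\textnormal{N}}$ of the $\O_S$-linear map $\phi\colon F^e_*\O_S\to\O_S$; hence Theorem \ref{thm:main} applies and yields that $\phi$ is surjective. Finally $\phi$ is the restriction of $\phi_{S+\Delta}$ along $\O_S=\O_X/\O_X(-S)$ (this is just the $\phi_{S+\Delta}$-compatibility of $\O_X(-S)$), so Lemma \ref{lem:local}, applied with $I=\O_X(-S)$, $\alpha=\phi$ and $\beta=\phi_{S+\Delta}$, forces $\phi_{S+\Delta}$ to be surjective. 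Since $x\in S$ was arbitrary, $(X,S+\Delta)$ is $F$-pure near $S$.

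The genuinely new input is the single application of Theorem \ref{thm:main}; the only point requiring care is checking its hypotheses for $S$ — reducedness and $F$-finiteness are automatic and \hst{} is assumed — together with the bookkeeping identification of the extension $\overline{\phi}$ used to build the $F$-different with the normalization extension required by the theorem. Everything else is formal manipulation with Lemmas \ref{lem:local} and \ref{lem:ExtendingMapsUnderNormalization}, so I do not anticipate a serious obstacle beyond correctly aligning these pieces.
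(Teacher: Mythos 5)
Your proposal is correct and follows essentially the same route as the paper: the first assertion is quoted from the construction of the $F$-different, and the equivalence is obtained by chaining Lemma \ref{lem:local} (surjectivity of $\phi_{S+\Delta}$ versus the induced map $\phi$ on $\O_S$) with Theorem \ref{thm:main} (surjectivity of $\phi$ versus its extension $\overline{\phi}$ to $S^{\textnormal{N}}$, using \hst). Your write-up merely makes explicit the two easy converse directions that the paper's ``if and only if'' phrasing leaves implicit.
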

\begin{proof}We need only prove the final statement.
Using the notation above, it follows from Lemma \ref{lem:local} that $\phi_{S+\Delta}$ is surjective if and only if $\phi$ is surjective.
By our Main Theorem, $\phi$ is surjective if and only if $\overline{\phi}$ is surjective because $S$ has \hst.
\end{proof}


\def\cprime{$'$} \def\cprime{$'$}
  \def\cfudot#1{\ifmmode\setbox7\hbox{$\accent"5E#1$}\else
  \setbox7\hbox{\accent"5E#1}\penalty 10000\relax\fi\raise 1\ht7
  \hbox{\raise.1ex\hbox to 1\wd7{\hss.\hss}}\penalty 10000 \hskip-1\wd7\penalty
  10000\box7}
\providecommand{\bysame}{\leavevmode\hbox to3em{\hrulefill}\thinspace}
\providecommand{\MR}{\relax\ifhmode\unskip\space\fi MR}
\providecommand{\MRhref}[2]{%
  \href{http://www.ams.org/mathscinet-getitem?mr=#1}{#2}
}
\providecommand{\href}[2]{#2}

\end{document}